\newtheorem{theorem}{Theorem}
\newtheorem{lem}{Lemma}
\newtheorem{cor}{Corollary}
\newtheorem{rem}{Remark}
\newtheorem{question}{Question}
\def\BibTeX{{\rm B\kern-.05em{\sc i\kern-.025em b}\kern-.08em
    T\kern-.1667em\lower.7ex\hbox{E}\kern-.125emX}}
\begin{document}
\title{The Distribution of the Number of Real Solutions to the Power Flow Equations\\

\thanks{The authors gratefully acknowledge support from the National Science Foundation under grant DMS 1735928.}
}

\author{\IEEEauthorblockN{Julia Lindberg\IEEEauthorrefmark{1},
Alisha Zachariah\IEEEauthorrefmark{2},
Nigel Boston\IEEEauthorrefmark{1}\IEEEauthorrefmark{2}, and
Bernard Lesieutre\IEEEauthorrefmark{1}}
\IEEEauthorblockA{\IEEEauthorrefmark{1}Department of Electrical and Computer Engineering,
University of Wisconsin-Madison, Madison, WI 53706 USA}
\IEEEauthorblockA{\IEEEauthorrefmark{2}Department of Mathematics,
University of Wisconsin-Madison, Madison, WI 53706 USA}
\thanks{Corresponding author: J. Lindberg, jrlindberg@wisc.edu.}}

\markboth{Journal of \LaTeX\ Class Files,~Vol.~14, No.~8, August~2015}%
{Shell \MakeLowercase{\textit{et al.}}: Bare Demo of IEEEtran.cls for IEEE Transactions on Magnetics Journals}
\maketitle

\begin{abstract}
 In this paper we study the distributions of the number of real solutions to the power flow equations over varying electrical parameters. We introduce a new monodromy and parameter homotopy continuation method for quickly finding all solutions to the power flow equations. We apply this method to find distributions of the number of real solutions to the power flow equations and compare these distributions to those of random polynomials. It is observed that while the power flow equations tend to admit many fewer real-valued solutions than a bound on the total number of complex solutions, for low levels of load they tend to admit many more than a corresponding random polynomial. We show that for cycle graphs the number of real solutions can achieve the maximum bound for specific parameter values and for complete graphs with four or more vertices there are susceptance values that give infinitely many real solutions.  
\end{abstract}

\section{ Introduction}

The power flow equations, a system of quadratic equations relating node voltages to active and reactive power injections at each node, are ubiquitous in all studies of electric power networks. Solutions to the power flow equations provide operating points for power networks, which are important for informing decisions from future planning concerns involving capital investments, to day-to-day resource scheduling and market operations, and to real-time stability analyses. While the study of these equations is rich, much is still unknown about the solutions to these equations.

The theoretical study of the power flow solutions can be broadly classified as work bounding the number of complex solutions \cite{baillieul1982geometric,chen2018on,mehta2016numerical,chen2018counting}, methods for finding all of the solutions \cite{molzahn2013counterexample,tavora1972stability,ma1993an,mehta2016numerical,lesieutre2015an,salam1989parallel,wu_2017} and more recently studying the distributions of the number of real solutions \cite{zachariah2018distributions,lindberg2018the, lesieutre2019on}. 
Practical studies have focused on finding a small number of particular types of solutions \cite{Liu2005,Lee2009,Klump2000}.

This paper primarily aims to compute distributions of the number of real solutions to the power flow equations by varying network parameters.  Explicitly calculating all real solutions tends to be computationally challenging -- methods to do so do not scale well with network size. For instance, the traditional approach uses a homotopy method that traces from known solutions for an easy polynomial system to those of the more complex power system model. This method first finds all of the complex solutions then selects those that are real. These methods generally are not even efficient in finding all complex solutions in the sense that they use an easy polynomial system with many more solutions than the desired power flow model. 

In this paper we introduce a new monodromy and parameter homotopy continuation method to find all complex solutions to the power flow equations. This technique allows us to exploit the symmetry present in the equations to improve computational speed. We then use this algorithm to find distributions of the number of real solutions to the power flow equations for cyclic graphs up to ten vertices and complete graphs up to eight vertices.

This paper is organized as follows: In Section \ref{sec2} we describe the power flow equations and the modeling assumptions we make. In Section \ref{sec3} we outline standard homotopy methods used to solve polynomial systems as well as introduce our monodromy and parameter continuation algorithm. We then compare the running time of each method. In Section \ref{sec4} we discuss the distribution of the number of real solutions to the power flow equations for varying susceptance values. Motivated by a result in applied algebraic geometry \cite{rouillier1999solving}, we compare these distributions to that of the real roots of a corresponding random polynomial and find that the power flow equations admit many more real solutions than a random polynomial. We also show that for cyclic networks the maximum number of real valued solutions, namely the complex bound given in \cite{chen2018counting}, can always be attained. We conclude in Section \ref{sec5} with families of networks that give special solution sets.

\section{Model and Approach} \label{sec2}

\subsection{The Power Flow Equations}\label{sec:IIA}
\indent We model an $n$-node electric power network as a connected, undirected graph, $G = (V, E)$, where each vertex $v_m \in V$, $0 \leq m \leq n-1$, represents a node (bus) in the power network. There is an edge, $e_{km}$ between vertices $v_k$ and $v_m$ if the corresponding nodes in the power network are connected. Each edge has a known complex admittance $b_{km} + j g_{km}$ where $b_{km}, g_{km} \in \mathbb{R}$ and $j$ denotes $\sqrt{-1}$. Each vertex $v_k$ has an associated complex power injection $P_k + j Q_k$, $P_k, Q_k \in \mathbb{R}$ where $P_k$ models the active power and $Q_k$ models the reactive power.

At each node, $k$, the relationship between the active and reactive power flows is captured by the nonlinear relations
\begin{align}
    P_k &= \sum_{m=0}^{n-1} V_k V_m (g_{km} \cos(\theta_k - \theta_m) + b_{km} \sin (\theta_k - \theta_m) ) \label{trigpfeqs1} \\
    Q_k &= \sum_{m=0}^{n-1} V_k V_m (g_{km} \sin (\theta_k - \theta_m) + b_{km} \cos(\theta_k - \theta_m)) \label{trigpfeqs2}
\end{align}
where $V_k$ is the voltage magnitude and $\theta_k$ represents the voltage angle at node $k$. We fix $v_0$ to be the \textit{slack bus}, designating $\theta_0 = 0$. We take the admittance $b_{km} + j g_{km}$ to be zero if the vertices $k$ and $m$ are not connected. Equations \eqref{trigpfeqs1}-\eqref{trigpfeqs2} are the \textit{power flow equations}, and for fixed admittances, there are four quantities associated with each node: voltage magnitude,$V_k$, voltage angle, $\theta_k$, active power injection $P_k$, and reactive power injection $Q_k$. 

As typical of traditional power flow studies, the equations for each node are solved for two of the nodal quantities when the other two are specified.
In this paper we consider a power network where all nodes have unknown reactive power injections but maintain constant voltage magnitude and active power is given: $Q_k$ and $\theta_k$ are unknown while $P_k$ and $|V_k|$, are known constants. The single exception is the slack bus for which $V_0$ and $\theta_0=0$ are specified. 

We can make this system purely algebraic by introducing the change of variables $x_k = V_k \cos (\theta_k)$ and $y_k = V_k \sin (\theta_k)$. Under this transformation $(\ref{trigpfeqs1})$ becomes the system of $2(n-1)$ equations in $2(n-1)$ variables
\begin{align}
P_k &= \sum_{m=0}^{n-1}  g_{km}(x_kx_m+y_ky_m)+b_{km}(x_ky_m-x_my_k) \label{fullpfeq1} \\
V_k^2 &= x_k^2+y_k^2 \label{fullpfeq2}
\end{align}
Equations \eqref{fullpfeq1} and \eqref{fullpfeq2} are the power flow equations for the special case where all nodes are PV nodes. The \textit{power  flow  problem} is to compute all of the real-valued solutions to the system of equations given by varying $k$. At the slack node $(x_0,y_0) = (V_0,0)$.

\subsection{Assumption: Lossless and Zero Power Injections}
In practice $g_{km} \ll b_{km}$ so, for the rest of this paper, we assume that the power network is $\mathit{lossless}$, meaning $g_{km}=0$ for all branches. In addition, we will assume 
zero power injections, i.e. $P_k = 0$ for all nodes $v_k$. The assumption of zero power injections is made because this condition typically admits the most real-valued solutions \footnote{Certain rare examples have been found where this fails to be true \cite{zachariah2018distributions}}. As we examine distributions with respect to electrical parameters, this low (zero) power injection model offers the greatest number of solutions to represent in this manner. 
Without loss of generality we assume $V_k = 1$ for each $k$.  With this normalization, the system of equations under consideration is 
\begin{align}
    x_k^2 + y_k^2 &= 1 \label{pfeq1}\\
    \sum_{m=0}^{n-1} b_{km}(x_ky_m - x_m y_k) &= 0 \label{pfeq2}
\end{align}
for $k = 1,\ldots,n-1$ where $x_0 = 1$ and $y_0 = 0$. Therefore, the only parameters of the system are the susceptances, $b_{km}$.

Under the above assumptions, any $n$-node system has $2^{n-1}$ ``trivial" solutions corresponding to setting $y_k=0$ and  having $x_k = \pm 1$. So, for any set of susceptances these power flow equations always admit $2^{n-1}$ real-valued, trivial solutions. 
We would like to determine the distribution of the number of nontrivial real solutions to \eqref{pfeq1}-\eqref{pfeq2} for susceptance values chosen at \textit{random}.

\subsection{Distribution of the Number of Real Solutions}

We now define what it means to pick susceptance values at random. Since the susceptances are real valued, the space of susceptances is not compact. It may then seem that there is no natural choice for a distribution (or, equivalently, a finite measure) on this space. However, because \eqref{pfeq1}-\eqref{pfeq2} are homogeneous, the number of real solutions is unaffected by simultaneously scaling all susceptance values. Therefore, without loss of generality we can normalize the susceptance values to lie on a unit sphere, which is compact. It is then natural to pick susceptances uniformly at random from the unit sphere. 

Let $N$ denote the number of real solutions for a random choice of susceptances. For any non-negative integer value of $N$, the set of tuples of susceptances that yield that many real solutions is a closed subset of the sphere and its measure is thereby well-defined. In other words, $N$ is a random variable in its own right, and we are interested in studying its distribution.

\section{A New Method for Counting Real Solutions} \label{sec3}
We would like to repeatedly solve \eqref{pfeq1}-\eqref{pfeq2} for randomly chosen susceptance values to obtain an empirical distribution on the number of nontrivial real solutions. Numerical methods for solving polynomial systems have been around for decades. We briefly outline the main idea below but give \cite{sturmfels2002solving,sommese2005the} as more detailed references.

Consider a system of polynomial equations
\begin{align*}
    F(x) &= \{ p_1(x_1,\ldots, x_m), p_2(x_1,\ldots,x_m) ,\ldots, p_m(x_1,\ldots, x_m) \} = 0.
\end{align*}{}
for which we assume that the number of solutions to $F(x) = 0$ is finite. 
The main idea is to construct a \textit{homotopy}
\begin{align}
    H(x;t) &= \gamma (1-t)G(x) + t F(x)
\end{align}
such that: 
\begin{enumerate}
    \item The solutions to $G(x)=0$ are trivial to find, 
    \item There are no singularities along the path $t \in [0,1)$, and
    \item All isolated solutions of $F(x) = 0$ can be reached \cite{li1997numerical}.
    \end{enumerate}
By using a random $\gamma \in \mathbb{C}$ each path for $t \in [0,1)$ avoids singularities almost surely, so condition $2$ is easily met. This is referred to as the \textit{gamma trick} \cite{mehta2016numerical}. Continuation methods are then used to track the solutions from $G(x) = 0$ to $F(x)=0$ as $t$ varies from $0$ to $1$.  \textit{predictor-corrector methods} are commonly used to compute the paths \cite{butcher2003numerical}.

The system $G(x)=0$ is called the \textit{start system} and there are many choices for it. A \textit{total degree} start system is 
\begin{align}
    G(x) &= \{x_1^{d_1}-1,\ldots, x_n^{d_n}-1 \} =0
\end{align}{}
where $d_i$ is the degree of $p_i$. The number of solutions to $G(x)=0$ is $d_1\cdots d_n$, which is the Bezout bound. This means that you have to track $d_1\cdots d_n$ paths in order to get all solutions to $F(x) = 0$. If $F(x)=0$ has close to $d_1\cdots d_n$ solutions this is a reasonable start system.

If $F(x)$ is sparse, the number of solutions to $F(x) = 0$ can be much less than $d_1\cdots d_n$ so tracking $d_1\cdots d_n$ paths is wasteful computation. In this case it is often more computationally efficient to use a \textit{polyhedral} start system. These homotopy algorithms rely on the Bernstein-Kushnirenko-Khovanskii (BKK) bound \cite{bernshtein1979the, kouchnirenko1976polyedres, khovanskii1978newton}, which gives an upper bound on the number of isolated $\mathbb{C}^* = \mathbb{C} \backslash \{0\}$ solutions for polynomial systems. This upper bound is called the \textit{mixed volume} of the system. For sparse polynomial systems the mixed volume can be much smaller than the Bezout bound. Huber and Sturmfels proposed the first polyhedral homotopy algorithm that achieves this bound by deforming the start system to a system with number of solutions equal to the mixed volume of the original system \cite{huber1995a}. The main disadvantage to polyhedral homotopy methods is that the start systems may not be as easy to solve as in the total degree case. There is still the potential for wasted computation here as the mixed volume of a system might not be a tight upper bound on the number of solutions.

In the case of the power flow equations the number of $\mathbb{C}^*$ solutions typically grows exponentially as the size of the network increases, so no matter the start system used, the number of paths that need to be tracked will also grow exponentially. Standard homotopy methods, including those described above, don't consider the symmetry in the equations or the fact that we can ignore the trivial solutions. We highlight a new method below that seeks to reduce this computational burden.

\subsection{Monodromy and Parameter Homotopy Continuation} \label{sec31}
Aiming to reduce the number of paths tracked, we introduce a monodromy and parameter homotopy algorithm that exploits the symmetry in the power flow equations. It is outlined in Algorithm 1 but explained in more detail below.
\begin{algorithm}
\begin{itemize}
\item \textbf{Input}: An undirected graph $G = (V,E)$, one choice of susceptances $b$ 
\item \textbf{Output}: All $\mathbb{C}^*$ solutions to the power flow equations with susceptances $b$ 
\item \textbf{Preprocessing Step}: 
\begin{enumerate}
\item Find one solution to the power flow equations for one choice of susceptance values $\hat{b} \in \mathbb{C}^{|E|}$
\begin{itemize}
    \item For $k = 1,\ldots, n-1$ pick random $x_k \in \mathbb{C}$ and set $y_k = \sqrt{1 - x_k^2}$ so $(x_k,y_k)$ satisfy \eqref{pfeq1} $\forall$ $k$.
    \item Plugging these choices of $x_k,y_k$ for $k = 1\ldots, n-1$ into the system of equations defined by \eqref{pfeq2}  gives an underdetermined linear system of equations in the susceptances.  Find one solution $\hat{b}$ to this system.
    \item Output one solution $(x,y)$ to the power flow equations for susceptances $\hat{b}$
\end{itemize}
\item Use Monodromy to find remaining nontrivial solutions to the power flow equations for susceptances $\hat{b}$ up to the equivalence $(x_1,\ldots,x_{n-1},y_1,\ldots,y_{n-1} ) \sim (x_1,\ldots,x_{n-1},-y_1,\ldots,-y_{n-1}) $. Call this solution set $S_{\hat{b}}$.
\end{enumerate}
\item[] \textbf{Procedure:}
\begin{enumerate}
\item Use Parameter Homotopy to track $S_{\hat{b}}$ from susceptances $\hat{b} \in \mathbb{C}^{|E|}$ to desired solution set $S_b$ for $b\in \mathbb{R}^{|E|}$
\end{enumerate}
\end{itemize}
\caption{}
\end{algorithm}

Monodromy methods work by taking one solution to a system of polynomial equations and finding all solutions. We omit the details here and give \cite{duff2019solving,campo2017critical,amendola2016solving} as a general reference and \cite{lindberg2020exploiting} as a reference specific to the power flow equations. 

In order for monodromy methods to find all solutions for a given set of susceptances, the variety defined by the power flow equations for that set of susceptances must be \textit{irreducible}, meaning it cannot decompose into a union of algebraic sub-varieties.

\begin{lem} \label{irreduciblelemma}
 The nontrivial solutions to $(\ref{pfeq1})-(\ref{pfeq2})$ form an irreducible variety.
\end{lem}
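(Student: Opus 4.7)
The strategy is to realize the nontrivial variety as a linear fibration over an irreducible base, exploiting the linearity of (\ref{pfeq2}) in the susceptance parameters $b$.

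First, I parametrize the voltage torus $T = \{(x,y) : x_k^2 + y_k^2 = 1,\ k=1,\ldots,n-1\}$ cut out by (\ref{pfeq1}) via the substitution $z_k := x_k + i y_k \in \mathbb{C}^\ast$. This identifies $T$ with the smooth, irreducible affine variety $(\mathbb{C}^\ast)^{n-1}$, and since the trivial solutions correspond to the finite set where each $z_k \in \{\pm 1\}$, the nontrivial locus $T^\circ \subseteq T$ is open, dense, and irreducible.

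Second, because (\ref{pfeq2}) is linear in $b$, it rewrites as $M(x,y)\,b = 0$, where $M(x,y)$ is the $(n-1) \times |E|$ matrix whose $(k, \{k,m\})$-entry is $x_k y_m - x_m y_k$. The nontrivial incidence variety
\[
\mathcal{X}^\circ := \{((x,y), b) \in T^\circ \times \mathbb{C}^{|E|} : M(x,y)\, b = 0\}
\]
projects to $T^\circ$ with linear (hence irreducible) fibers $\ker M(x,y)$. The central computation is to show that $M$ has maximal rank $n-1$ on a dense open $U \subseteq T^\circ$. To this end I fix a spanning tree $\mathcal{T}$ of the connected graph $G$ rooted at the slack bus, order the non-root vertices $v_1,\ldots,v_{n-1}$ by BFS depth, and consider the submatrix of $M$ with columns indexed by the tree edges $\{v_i, p(v_i)\}$. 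Since each tree edge is incident only to its child $v_i$ (the diagonal) and to its parent $p(v_i)$ (which has a smaller BFS index), this submatrix is upper triangular with diagonal entries $\pm\bigl(x_{v_i} y_{p(v_i)} - x_{p(v_i)} y_{v_i}\bigr)$, each a nonzero polynomial on $T$. Hence $\mathcal{X}^\circ|_U$ is a rank-$(|E|-n+1)$ vector bundle over the irreducible base $U$, and its total space is irreducible.

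Finally, the Zariski closure $\overline{\mathcal{X}^\circ|_U}$ in $\mathcal{X}^\circ$ is an irreducible subvariety of dimension $|E|$. The main obstacle is verifying that this closure is all of $\mathcal{X}^\circ$: over the rank-drop locus of $M$ the kernel $\ker M(x,y)$ jumps in dimension, so one must rule out additional top-dimensional irreducible components arising there. I would handle this by a specialization argument showing that every vector in a jump kernel is a limit of generic-rank kernels along a curve in $T^\circ$ — equivalently, that the kernel sheaf of $M$ has no torsion subsheaves supported on the rank-drop locus — which, together with the connectedness of $\mathcal{X}^\circ$ via its surjection onto $T^\circ$ with connected linear fibers, forces $\mathcal{X}^\circ = \overline{\mathcal{X}^\circ|_U}$ and hence irreducibility.
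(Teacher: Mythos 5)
Your setup is sound, and it in fact mirrors the argument the paper defers to (Lemma 3.1 of the cited reference): there, too, the circle conditions are rationally parametrized (via $x=2t/(1+t^2)$, $y=(1-t^2)/(1+t^2)$ rather than your $z=x+iy$), the system \eqref{pfeq2} is viewed as a linear system $M(x,y)\,b=0$ over the irreducible torus, and the key point is that the weighted incidence matrix generically has rank $n-1$. Your spanning-tree triangularization is a clean and correct way to obtain that rank statement, and the identification $T\cong(\mathbb{C}^\ast)^{n-1}$ together with the nonvanishing of $x_ky_m-x_my_k$ on $T$ is right.

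The gap is in your final paragraph. The assertion that every vector in a jump kernel is a limit of generic-rank kernels is precisely the statement that needs proof, not a routine specialization fact, and it is false for general matrices of functions: for the $1\times 2$ matrix $M(t)=(t\;\;0)$ over $\mathbb{A}^1$ the incidence variety $\{t b_1=0\}$ is the union $\{t=0\}\cup\{b_1=0\}$, which is reducible even though it is connected and fibers over an irreducible base with irreducible linear fibers; so ``connectedness via a surjection with connected linear fibers'' cannot force irreducibility, and the vector $(1,0)$ in the jump kernel at $t=0$ is not a limit of generic kernels. What is actually needed is a dimension count: by Krull's height theorem every irreducible component of $\mathcal{X}^\circ$, which is cut out by $n-1$ equations inside the $(n-1+|E|)$-dimensional variety $T^\circ\times\mathbb{C}^{|E|}$, has dimension at least $|E|$, while $\overline{\mathcal{X}^\circ|_U}$ has dimension exactly $|E|$; hence a second component can exist only over a stratum of $T^\circ$ on which $\operatorname{rank}M=n-1-r$ and which has codimension at most $r$ in $T^\circ$. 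To close the proof you must show that for every $r\geq 1$ the locus $\{\operatorname{rank}M\leq n-1-r\}\cap T^\circ$ has codimension strictly greater than $r$. This is a concrete, graph-dependent analysis of the entries $x_ky_m-x_my_k$ (equivalently of the ratios $z_k/z_m$; for instance, row $k$ vanishes identically exactly when $z_k=\pm z_m$ for every neighbor $m$ of $k$), and it is the real content of the irreducibility claim: as written, your argument stops exactly where that work begins.
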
{}
\begin{proof}
See Lemma 3.1 in \cite{lindberg2020exploiting}.
\end{proof}{}

\begin{cor}
    If \eqref{pfeq1}-\eqref{pfeq2} has nonzero active power injections, then the variety corresponding to these equations is irreducible.
\end{cor}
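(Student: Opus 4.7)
The plan is to observe that when some $P_k \neq 0$, the ``trivial'' locus is automatically absent from the solution set, so the entire variety behaves like the nontrivial variety in Lemma \ref{irreduciblelemma} and admits the same irreducibility argument. Writing
\[
V_P = \{(x,y,b) : x_k^2+y_k^2=1,\ \textstyle\sum_m b_{km}(x_ky_m-x_my_k) = P_k \text{ for all } k\},
\]
I would first note that at any trivial point ($y_k = 0$, $x_k = \pm 1$) every expression $x_ky_m - x_my_k$ vanishes, forcing $P = 0$; hence if $P \neq 0$, no trivial tuple lies in $V_P$.

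Next I would run the fibration argument underlying Lemma \ref{irreduciblelemma}, now with $P$ on the right-hand side of the linear $b$-equations. Projecting $\pi \colon V_P \to T := \{(x,y) : x_k^2+y_k^2=1\}$ via $(x,y,b) \mapsto (x,y)$, the fiber over $(x,y)$ is the affine subspace $\{b : L_{x,y}(b) = P\}$, where $L_{x,y} \colon \mathbb{C}^{|E|} \to \mathbb{C}^{n-1}$ is the same linear map that appears in the lemma. The torus $T$ is irreducible, and on the open subset $U \subset T$ where $L_{x,y}$ attains its maximal rank $n-1$ the fiber is an affine space of dimension $|E|-(n-1)$, so $\pi^{-1}(U)$ is a trivial affine bundle over an irreducible base and is therefore irreducible.

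The main obstacle is to rule out extra top-dimensional components arising over the rank-dropping locus of $L_{x,y}$. This locus is a proper closed subvariety of $T$; fiber dimensions can only jump upward there, and a standard upper-semicontinuity argument bounds the preimage dimension by $|E|$, with equality only if the preimage lies in the closure of $\pi^{-1}(U)$. It follows that $V_P = \overline{\pi^{-1}(U)}$ is irreducible. Conceptually, the corollary is saying that the right-hand side of the $b$-linear equations plays no role in the irreducibility argument beyond translating the affine fibers, so the same proof strategy as for Lemma \ref{irreduciblelemma} applies once the trivial locus has been ruled out a priori.
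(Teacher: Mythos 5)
Your argument is essentially the paper's: both exploit the linearity of the power-balance equations in the susceptances, fiber the solution variety over the irreducible space of voltage configurations, and use the generic rank $n-1$ of the weighted incidence matrix to obtain nonempty affine fibers of constant dimension over a dense open set. The only cosmetic difference is that the paper first applies the rational parametrization $x_i = 2t_i/(1+t_i^2)$, $y_i = (1-t_i^2)/(1+t_i^2)$ so that the base becomes $\mathbb{C}^{n-1}$, whereas you work directly on the product of the conics $x_k^2+y_k^2=1$; both bases are irreducible, so nothing hinges on this. Your preliminary observation that $P \neq 0$ excludes the trivial locus is a worthwhile addition: the corollary asserts irreducibility of the \emph{whole} variety rather than only its nontrivial part (contrast Lemma~\ref{irreduciblelemma}), so this point is genuinely needed and the paper leaves it implicit.

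One step is not airtight as written. From a dimension bound $\dim \pi^{-1}(Z) \le |E|$ over the rank-dropping locus $Z$ you conclude $V_P = \overline{\pi^{-1}(U)}$, but a component of $\pi^{-1}(Z)$ of dimension strictly less than $|E|$ that fails to lie in $\overline{\pi^{-1}(U)}$ would still make $V_P$ reducible, so dimension counting alone does not close the argument; moreover the bound itself requires checking that the rank of $L_{x,y}$ does not drop faster than the codimension of the corresponding stratum of $Z$ (together with the nonemptiness condition $P \in \operatorname{im} L_{x,y}$). What the fibration argument cleanly delivers is that $\overline{\pi^{-1}(U)}$ is the unique irreducible component dominating the base, which is all the monodromy application needs; the paper sidesteps the residual components by deferring exactly this step to Lemma 3.1 of the cited reference. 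A self-contained proof of irreducibility of all of $V_P$ would need the additional argument that every point of $\pi^{-1}(Z)$ is a limit of points of $\pi^{-1}(U)$.
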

\begin{proof}
As in Lemma 3.1 of \cite{lindberg2020exploiting} we consider the change of coordinates $x_i = \frac{2t_i}{1+t_i^2}$ and $y_i = \frac{1-t_i^2}{1+t_i^2}$. This transforms \eqref{pfeq1}-\eqref{pfeq2} into for $k=1,\ldots,n-1$
\begin{align}
P_k &= \sum_{m=0}^{n-1} b_{km} \Big( \frac{2t_k(1-t_m^2)-2t_m(1-t_k^2)}{(1+t_k^2)(1+t_m^2)} \Big) \label{powerinjectionparameqs}
\end{align}
for $P_k \neq 0$. As explained in the proof of Lemma 3.1 in \cite{lindberg2020exploiting}, it suffices to show that for almost all $t = (t_1,\ldots, t_{n-1}) \in \mathbb{C}^{n-1}$ there exists a $P_1,\ldots,P_{n-1} \in \mathbb{R}$ and $b \in \mathbb{R}^{|E|}$ that is a solution to \eqref{powerinjectionparameqs}. This system of equations is linear in the susceptances $b_{km}$ and active power injections $P_k$ for $m,k=1,\ldots,n-1$ so we can write it as $Ab = P$ where $P = (P_1,\ldots,P_{n-1})$ and $A\in \mathbb{C}^{n-1 \times |E|}$ is a weighted incidence matrix of $G$. This matrix generically has rank $n-1$, meaning for almost all choices of $t \in \mathbb{C}^{n-1}$, $Ab = P$ has a solution.
\end{proof}

This means that using monodromy methods it is theoretically possible to find all solutions to the power flow equations given just one solution. One downside of monodromy is that unless the number of $\mathbb{C}^*$ solutions is known, there is no stopping criterion for this algorithm to terminate. We denote the number of $\mathbb{C}^*$ solutions as $K$. For a fixed network, $K$ is generically independent of the susceptance values. For complete graphs $K_n$ and cyclic graphs $C_n$ on $n$ vertices with real-valued power injections, \cite{baillieul1982geometric,chen2018counting} prove that the number of $C^*$ solutions is $\binom{2n-2}{n-1}$ and $n\binom{n-1}{\lfloor \frac{n-1}{2}\rfloor}$ respectively\footnote{We note that while these bounds were proven for networks with nonzero active power injections, they are still valid under our assumption of zero active power injections. This is because zeroing out the constant terms does not change the Jacobian of the system and therefore won't force the system onto the discriminant locus.}. This provides an upper bound for $K$ and can be used as a stopping criterion for the monodromy method. For other graphs where the number of $\mathbb{C}^*$ solutions is not known, a common practice is to terminate the calculation after there have been $10$ loops without finding any new solutions. At this point you can run a \textit{trace test} to verify that there are no other solutions \cite{hauenstein2020multiprojective}. For a fixed network, we think of this monodromy step as a preprocessing step in the case of the power flow equations. 

Once we find all solutions, $S_{\hat{b}}$, to the power flow equations for one choice of susceptances $\hat{b}$, we use parameter continuation methods to track these solutions to solutions $S_{b}$ for our desired choice of susceptances, $b$. Parameter continuation works as follows. Consider a system of parametric polynomial equations
\begin{align*}
    F(x,\hat{b}) = \{f_1(x,\hat{b}),\ldots,f_N(x,\hat{b}) \} = 0
\end{align*}{}
where $\hat{b}\in \mathbb{C}^m$ are the parameters and $x \in \mathbb{C}^N$ are the variables. In the case of the power flow equations on a graph $G = (V, E)$ with $|V| = n$ and $|E| = m$, we have $\hat{b} \in \mathbb{C}^m$ and $x \in \mathbb{C}^{2n-2}$. We then construct a homotopy to our target parameters $b \in \mathbb{R}^m$
\begin{align*}
    H(x;t) = F(x, \frac{\gamma_1(1-t)\hat{b} + \gamma_2 tb}{t \gamma_2 + (1-t)\gamma_1})
\end{align*}{}
where $t$ runs from $0$ to $1$. Again, we choose random $\gamma_1,\gamma_2 \in \mathbb{C}$ to avoid singularities. This is an efficient homotopy method in the sense that for every solution in $S_{b}$ we track exactly one path from $S_{\hat{b}}$. 

So far we have explained a basic monodromy preprocessing step paired with a parameter continuation algorithm that will do at least as well, in terms of number of paths tracked, as polyhedral homotopy methods. For the case of counting the number of real solutions to the power flow equations, we have additional information that we would like to exploit. We first would like to avoid any computation needed to find the trivial solutions. Since these solutions lie in their own subvariety, the monodromy method outlined above will not find these solutions and the parameter continuation algorithm will not track these paths. In addition we observe the following symmetry in solutions.
\begin{lem}
 If $(x_1,\ldots,x_{n-1},y_1,\ldots,y_{n-1})$ is a solution to \eqref{pfeq1}-\eqref{pfeq2}, so is $(x_1,\ldots,x_{n-1},-y_1,\ldots,-y_{n-1})$.
\end{lem}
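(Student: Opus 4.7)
The plan is to verify directly that the substitution $y_k \mapsto -y_k$ for $k = 1, \ldots, n-1$ (with $x_k$ unchanged, and $x_0 = 1$, $y_0 = 0$ held fixed at the slack bus) sends solutions of \eqref{pfeq1}--\eqref{pfeq2} to solutions. This is a pure substitution check on the two families of equations, separated by type.

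First I would handle the quadratic equations \eqref{pfeq1}. Since $x_k^2 + y_k^2 = x_k^2 + (-y_k)^2$, the voltage-magnitude constraint is invariant under the sign flip term by term, so \eqref{pfeq1} continues to hold.

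Next I would check the linear angle-difference equations \eqref{pfeq2}. The key algebraic observation is that for every pair $(k,m)$, the expression $x_k y_m - x_m y_k$ is bilinear in the $y$-variables in the sense that it picks up an overall sign of $-1$ when both $y_k$ and $y_m$ are negated. The one point requiring care is the slack bus $m = 0$: here $y_0 = 0$, so $y_0 \mapsto -y_0 = 0$ is consistent, and the surviving summand $-b_{k0} x_0 y_k$ still flips sign under $y_k \mapsto -y_k$. Consequently, for each fixed $k$,
\begin{equation*}
\sum_{m=0}^{n-1} b_{km}\bigl(x_k(-y_m) - x_m(-y_k)\bigr) = -\sum_{m=0}^{n-1} b_{km}(x_k y_m - x_m y_k) = 0,
\end{equation*}
so \eqref{pfeq2} is preserved as well.

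There is no real obstacle here: the statement is essentially the observation that \eqref{pfeq1}--\eqref{pfeq2} are invariant under the $\mathbb{Z}/2\mathbb{Z}$ action $y \mapsto -y$, which is compatible with fixing the slack bus only because $y_0$ happens to be $0$. The only subtlety worth flagging in the write-up is this compatibility at $m = 0$, since without $y_0 = 0$ the symmetry would fail.
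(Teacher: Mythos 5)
Your proof is correct: the direct substitution check (invariance of $x_k^2+y_k^2$ under the sign flip, and the overall factor of $-1$ on each bilinear term $x_ky_m-x_my_k$, with the compatibility at the slack bus where $y_0=0$) is exactly the content of this symmetry. The paper itself gives no in-line argument and simply cites Lemma 3.2 of an external reference, so your write-up is a correct, self-contained version of what is essentially the same elementary observation.
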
{}
\begin{proof}
See Lemma $3.2$ in \cite{lindberg2020exploiting}.
\end{proof}{}
This symmetry allows us to speed up computation even more. Instead of tracking all paths in $S_{\hat{b}}$, we only need to track half of them.  This leads to a major reduction in the number of paths we need to track. For bipartite networks, we observe even more symmetry.

\begin{lem}
\label{symmetrybipartitelemma}
Let $G = (V,E)$ be a bipartite graph with disjoint vertex sets $S,T \subset V$ that partition $V$ where for all $e = v_mv_k \in E$, $v_m \in S$ and $v_k \in T$ or vice versa. Without loss of generality, say $v_0,\ldots,v_s \in S$ and $v_{s+1},\ldots,v_{n-1} \in T$. Then if $(x_1,\ldots,x_{n-1},y_1,\ldots,y_{n-1})$ is a solution to \eqref{pfeq1}-\eqref{pfeq2} so is
\begin{enumerate}
    \item $(x_1,\ldots,x_{n-1},-y_1,\ldots,-y_{n-1})$
    \item $(-x_1,\ldots,-x_s,x_{s+1},\ldots,x_{n-1},\\
    y_1,\ldots,y_s,-y_{s+1},\ldots,-y_{n-1})$
    \item $(-x_1,\ldots,-x_s, x_{s+1},\ldots,x_{n-1},\\
    -y_1,\ldots,-y_s,y_{s+1},\ldots,y_{n-1})$
\end{enumerate}
\end{lem}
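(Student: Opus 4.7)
The plan is direct verification. Each of the three proposed transformations is a pattern of $\pm 1$ sign flips on the individual coordinates $x_k,y_k$, so the unit-modulus constraints \eqref{pfeq1} are preserved automatically and all of the content lies in checking that the flow equations \eqref{pfeq2} still vanish. Statement (1) is just the preceding lemma. A convenient reduction is to note that the identity together with (1), (2), (3) form a Klein four-group under composition: a quick check shows $(1)\circ(3)=(2)$, so it suffices to verify one of (2) or (3) in addition to (1).

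The structural input is bipartiteness. Because $b_{km}=0$ whenever $v_k$ and $v_m$ lie in the same part, the sum
\[
\sum_{m=0}^{n-1} b_{km}(x_k y_m - x_m y_k)
\]
for fixed $k$ collects contributions only from edges crossing the $S$--$T$ partition. Fixing a transformation, I would split into the cases $k\in S\setminus\{v_0\}$ and $k\in T$. For each such $k$, and for every surviving $m$ on the opposite side of the bipartition, I would write out how the prescribed sign flips act on the factor $x_k y_m - x_m y_k$ and confirm that the change is uniform in $m$: either every surviving term is unchanged, in which case the sum stays at its previous value of $0$, or every surviving term picks up a common factor of $-1$, in which case the sum stays at $-0=0$.

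The main obstacle I anticipate is the slack bus $v_0\in S$, whose coordinates $(x_0,y_0)=(1,0)$ are pinned and are not moved by any of the listed transformations. For a vertex $k\in T$ adjacent to $v_0$, the slack edge contributes the extra term $b_{k0}(x_k y_0 - x_0 y_k)=-b_{k0}\,y_k$ to the flow equation at $k$, and this term must transform in the same coherent way as the non-slack contributions to the same sum. Concretely, at a $T$-vertex $k$ the way the transformation acts on $y_k$ has to match the way it acts on the quadratic $x_k y_m - x_m y_k$ across every edge from $k$ into $S\setminus\{v_0\}$. Verifying this slack compatibility is the delicate part of the proof; once it is established, the remainder is a small per-case sign tally, and \eqref{pfeq2} is preserved under each of (1), (2), (3).
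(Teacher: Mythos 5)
The paper gives no proof of this lemma beyond a citation, so direct verification is the right instinct, and your reduction is sound as far as it goes: the three maps together with the identity do form a Klein four-group with $(1)\circ(3)=(2)$, sign flips trivially preserve \eqref{pfeq1}, and only cross-edges of the bipartition contribute to \eqref{pfeq2}. The gap is that the one step you defer --- ``slack compatibility'' --- is exactly the step that fails for the transformations as written, and your plan offers no mechanism for resolving it. Carry out your own sign tally for (3) at a vertex $v_k\in T$ adjacent to $v_0$. The flow equation there reads
\[
\sum_{\substack{m\in N(k)\\ m\neq 0}} b_{km}(x_k y_m - x_m y_k)\;-\;b_{k0}y_k \;=\;0 .
\]
Under (3) the coordinates at $k\in T$ are fixed while $(x_m,y_m)\mapsto(-x_m,-y_m)$ for $m\in S\setminus\{v_0\}$, so every non-slack term flips sign while the slack term $-b_{k0}y_k$ does not; the left-hand side becomes $-2b_{k0}y_k$, which is nonzero for a generic nontrivial solution. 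The same mismatch, with the two kinds of terms exchanging roles, occurs for (2): on $C_4$ with $S=\{v_0,v_2\}$, $T=\{v_1,v_3\}$, the equation at $v_1$ is $-b_{10}y_1+b_{12}(x_1y_2-x_2y_1)=0$ and applying (2) turns its left-hand side into $2b_{10}y_1$. So the ``uniform in $m$'' sign behavior you assert is false precisely on slack edges, and the proof as planned cannot close.

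The obstruction is clearest in angle coordinates: (3) is the shift $\theta_m\mapsto\theta_m+\pi$ on one part of the bipartition, which negates $\sin(\theta_k-\theta_m)$ on every crossing edge only if applied to that \emph{entire} part, including $v_0$ when $v_0$ lies in it --- but $\theta_0=0$ is pinned and cannot be moved to $\pi$. The symmetry therefore exists only when the flipped part is the one \emph{not} containing the slack bus: the valid maps send $(x_m,y_m)\mapsto(-x_m,-y_m)$ for $m\in T$ with $S$ fixed (the analogue of (3)), and $(x_m,y_m)\mapsto(-x_m,y_m)$ on $T$, $(x_m,y_m)\mapsto(x_m,-y_m)$ on $S\setminus\{v_0\}$ (the analogue of (2)); on $v_0$ these act as $\theta_0\mapsto-\theta_0$, which fixes $(1,0)$. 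With that correction your uniform-sign argument does close, because the slack term then sits in an equation at $k\in T$ and transforms with the same sign as its companions. As it stands, your proposal either halts at the step you flagged or, pushed through mechanically, asserts the key sign identity where it is false; completing the proof requires repairing which side of the bipartition is flipped, not merely verifying the statement as given.
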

\begin{proof}
See Lemma $3.3$ in \cite{lindberg2020exploiting}.
\end{proof}

In the case of nonzero active power injections, bipartite graphs still have some symmetry.

\begin{cor}
\label{symmetrybipartitelemma}
Let $G = (V,E)$ be a bipartite graph with disjoint vertex sets $S,T \subset V$ that partition $V$ where for all $e = v_mv_k \in E$, $v_m \in S$ and $v_k \in T$ or vice versa. Without loss of generality, say $v_0,\ldots,v_s \in S$ and $v_{s+1},\ldots,v_{n-1} \in T$. Consider \eqref{pfeq1}-\eqref{pfeq2} with nonzero active power injections:
\begin{align}
P_k &= \sum_{m=0}^{n-1} b_{km}(x_ky_m - x_m y_k)  \label{nonzeropowerinjections}
\end{align}
$P_k \neq 0$, for $k = 1,\ldots,n$. Then if $(x_1,\ldots,x_{n-1},y_1,\ldots,y_{n-1})$ is a solution to \ref{pfeq2} so is $(-x_1,\ldots,-x_s,x_{s+1},\ldots,x_{n-1},y_1,\ldots,y_s,-y_{s+1},\ldots,-y_{n-1})$.
\end{cor}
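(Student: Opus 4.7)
The plan is to prove this corollary by direct substitution, paralleling the strategy of the preceding lemma. Given a solution $(x_1,\ldots,x_{n-1},y_1,\ldots,y_{n-1})$ of \eqref{nonzeropowerinjections}, I would define the transformed tuple $(x',y')$ as in the statement and verify that every power balance equation survives the substitution. The unit-modulus constraints $x_i^2+y_i^2=1$ are trivially preserved since only signs change, so the real work is in checking the balance equations.

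The heart of the argument is a term-by-term comparison of the two sums $\sum_m b_{km}(x_k'y_m'-x_m'y_k')$ and $\sum_m b_{km}(x_k y_m - x_m y_k)$ at each node $k \in \{1,\ldots,n-1\}$. By bipartiteness, $b_{km}$ is nonzero only on edges between $S$ and $T$, so only those terms contribute. The key observation is that for any non-slack crossing edge with $k \in S \setminus \{0\}$ and $m \in T$, the transformation negates exactly one factor of each product: $x_k' y_m' = (-x_k)(-y_m) = x_k y_m$ and $x_m' y_k' = x_m y_k$, so the entire term $b_{km}(x_k y_m - x_m y_k)$ is invariant. The symmetric case $k\in T$, $m\in S\setminus\{0\}$ is identical. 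This contrasts with transformations (1) and (3) of the preceding lemma, both of which globally flip the sign of the bilinear form and therefore only preserve equations with zero right-hand side. Precisely because transformation (2) preserves each individual term, it should continue to work once $P_k$ is allowed to be nonzero.

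The main obstacle is careful handling of the slack bus, since $(x_0,y_0)=(1,0)$ is not flipped under the transformation and so cannot be used to ``balance'' sign changes in the way the non-slack nodes can. For an edge incident to $v_0$, the contribution $b_{k0}(x_k y_0 - x_0 y_k) = -b_{k0}y_k$ needs a dedicated check, and I expect the argument to lean on $y_0 = 0$ to kill one of the two bilinear products and on the placement of $v_0$ in $S$ to route the surviving $y_k$-dependence onto the correct side of the partition. Once this slack-bus bookkeeping is done, the same term-level invariance used for the interior edges completes the verification, and $(x',y')$ satisfies \eqref{nonzeropowerinjections} with the same $P_k$.
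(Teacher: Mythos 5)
Your term-by-term verification for edges not touching the slack bus is correct, and you are right to single out the slack bus as the delicate point --- but the ``dedicated check'' you defer is precisely where the argument breaks, and it cannot be completed for the transformation as written. Take $k\in T$ adjacent to $v_0$ (such a $k$ always exists: $G$ is connected and bipartite with $v_0\in S$, so every neighbor of $v_0$ lies in $T$). The slack contribution to the balance equation at $k$ is $b_{k0}(x_ky_0-x_0y_k)=-b_{k0}y_k$. Under the stated map $y_k\mapsto -y_k$ (because $k\in T$) while $(x_0,y_0)=(1,0)$ is untouched, so this term becomes $+b_{k0}y_k$; every other term in the sum at $k$ is preserved by your edge-level computation, so the right-hand side changes to $P_k+2b_{k0}y_k\neq P_k$ for generic data. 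The mechanism you hoped for --- that $y_0=0$ kills one product and the placement of $v_0$ in $S$ routes the surviving $y_k$-dependence onto the ``correct'' side --- in fact does the opposite: the surviving product is $x_0y_k$ with $k$ on the $T$ side, which is exactly the side whose $y$-coordinates get negated.

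The claim can be repaired by using the other bipartite sign pattern, namely $(x_1,\ldots,x_s,-x_{s+1},\ldots,-x_{n-1},-y_1,\ldots,-y_s,y_{s+1},\ldots,y_{n-1})$, i.e.\ negate $y$ on $S$ and $x$ on $T$. This version genuinely fixes the slack bus, since negating $y_0=0$ does nothing, and now every crossing term is individually invariant, slack-adjacent or not: for $k\in T$ and $m\in S$ (including $m=0$) one has $x_k'y_m'-x_m'y_k'=(-x_k)(-y_m)-x_my_k=x_ky_m-x_my_k$, and symmetrically for $k\in S\setminus\{0\}$, $m\in T$. With that correction your term-by-term strategy goes through verbatim and the conclusion holds for arbitrary $P_k$. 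As written, however, your proposal leaves open --- and cannot close --- the one case that actually matters.
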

\begin{proof}
Substituting in $(-x_1,\ldots,-x_s,x_{s+1},\ldots,x_{n-1},\\y_1,\ldots,y_s,-y_{s+1},\ldots,-y_{n-1})$ to \eqref{nonzeropowerinjections}, the result is clear.
\end{proof}

The comparison between the number of paths needed to track using our new modified parameter, polyhedral and total degree homotopy methods is shown in Figure $\ref{fig:paths}$. We compare running times in Section \ref{sec33}.

\begin{figure}[h!]
    \centering
    \includegraphics[width =0.5\textwidth]{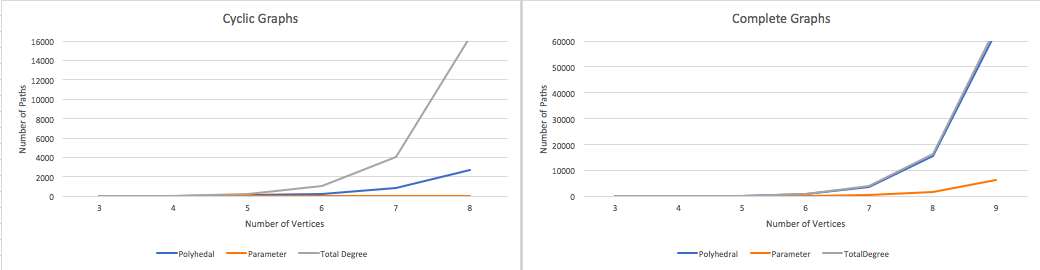}
    \caption{Number of Paths Needed to track for Step $3$ of Algorithm $1$}
    \label{fig:paths}
\end{figure}{}

\subsection{Comparison Against Other Homotopy Continuation Methods} \label{sec33}
We would like to compare how long it takes to find all solutions of the power flow equations using the new method outlined in Section~\ref{sec31} versus more standard total degree homotopy and polyhedral homotopy methods. The table below gives the average amount of time it takes to find all solutions to the power flow equations in a trial of $100$. We use \texttt{HomotopyContinuation.jl} for all methods \cite{HomotopyContinuation.jl} and do all computations on a $2018$ Macbook Pro with a $2.3$ GHz Quad-Core Intel Core i5 processor.

\begin{table}[h!]
\caption{Average time (seconds) to find all solutions to $K_n$}
\label{timecompletenetworks}
\centering
\begin{tabular}{|c|c|c|c|c|c|c|c|c|}
\hline
$n$ & 4 & 5 & 6 & 7 & 8 &9 \\
\hline
Total Degree & $0.03$ & $0.23$ & $0.97$ & $9.71$ & $46.86$ & $279.38$  \\
\hline
Polyhedral& $0.04$ & $0.29$ & $1.03$ & $18.57$ & $115.25$ & $644.52$  \\
\hline 
Parameter & $0.003$ & $0.03$ & $0.14$ & $0.62$ & $4.85$ & $29.79$  \\
\hline 
\end{tabular}
\end{table}

\begin{table}[h!]
\caption{Average time (seconds) to find all solutions to $C_n$}
\label{timecyclicnetworks}
\centering
\begin{tabular}{|c|c|c|c|c|c|c|c|c|}
\hline
$n$ & 3 & 4 & 5 & 6 & 7 & 8  \\
\hline
Total Degree & $0.01$ & $0.03$ & $0.14$ & $0.70$ & $4.75$ & $23.10$   \\
\hline
Polyhedral& $0.01$ & $0.02$ & $0.10$ & $0.36$ & $2.16$ & $9.60$  \\
\hline 
Parameter & $0.001$ & $0.001$ & $0.01$ & $0.01$ & $0.08$ & $0.13$  \\
\hline 
\end{tabular}
\end{table}

In all cases, we see that parameter homotopy is much faster than polyhedral and total degree homotopy. For the cyclic cases we also see that polyhedral homotopy outperforms total degree homotopy. This agrees with the plot seen in Figure $\ref{fig:paths}$ in that polyhedral homotopy is able to exploit the sparsity present in the cyclic cases and track a fraction of the paths compared with total degree. In contrast, polyhedral homotopy is never better than total degree in the complete cases. This is because the number of paths tracked in polyhedral homotopy is only slightly smaller than in the total degree case. In addition, the start system in the polyhedral case is more time consuming to compute. 

A downside of homotopy methods is that it is possible that not all paths tracked from a start system will make it to a target system. Some reasons for this is that an algorithm could incorrectly conclude a path is diverging to infinity when it is not or that two paths converge to the same solution when they should be distinct. These cases would happen if a solution is large or if two solutions are close together. We experienced these phenomena running our simulations below. If the parameter homotopy step lost solutions, we ran monodromy on the solution set to recover the remaining solutions. This was largely successful and in each topology studied, we found all solutions at least 98.6$\%$ of the time, ensuring accuracy of the computed distributions.

\section{Distributions of the Number of Real Solutions} \label{sec4}
By using the methods developed above, we are able to empirically find distributions of the number of real solutions to the power flow equations much faster, allowing for a more accurate description of the distributions. Using statistical methods, we can be precise about what more accurate means.

Given a random variable $X$, we define its \textit{cumulative distribution function} as 
\begin{align*}
    F(x) &= \mathbb{P}(X \leq x)
\end{align*}{}
for $x \in \mathbb{R}$. Given $n$ independent and identically distributed random variables $X_1,\ldots,X_n$ with cumulative distribution function $F$, we define the \textit{empirical distribution function as}
\begin{align*}
    F_n(x) &= \frac{1}{n} \sum_{i=1}^n \mathbf{1}_{\{X_i \leq x\}}
\end{align*}{}
where $\mathbf{1}$ is the indicator function. $F(x)$ gives the probability that one random variable is less than $x$ where $F_n(x)$ gives the probability that a fraction of random variables is less than $x$. The Dvoretzky–Kiefer–Wolfowitz inequality allows us to give confidence statements about the accuracy of empirical distributions based on the number of samples collected. 
\begin{lem}[Dvoretzky–Kiefer–Wolfowitz Inequality] \cite{dvoretzky1956asymptotic} \label{DKWinequality}

With probability $1- \alpha$
\begin{align}
    F_n(x) - \epsilon \leq F(x) \leq F_n(x) + \epsilon
\end{align}{}
where $\epsilon = \sqrt{\frac{\ln{\frac{2}{\alpha}}}{2n}}$.
\end{lem}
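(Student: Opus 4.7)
The plan is to establish the equivalent one-line tail bound $\mathbb{P}(\sup_x |F_n(x) - F(x)| \geq \epsilon) \leq 2 e^{-2n\epsilon^2}$ and then invert it to obtain the stated $1-\alpha$ confidence band. First I would reduce to a canonical setting via the probability integral transform: assuming $F$ is continuous, the random variables $U_i := F(X_i)$ are i.i.d.\ uniform on $[0,1]$, and the event $\{X_i \leq x\}$ coincides with $\{U_i \leq F(x)\}$, so $\sup_x |F_n(x) - F(x)|$ equals $\sup_{t \in [0,1]} |U_n(t) - t|$, where $U_n$ is the empirical c.d.f.\ of the $U_i$. The case of general (possibly discontinuous) $F$ follows by a standard approximation argument together with right-continuity of $F_n - F$.

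For each fixed $t \in [0,1]$, the quantity $n U_n(t) = \sum_{i=1}^n \mathbf{1}_{U_i \leq t}$ is Binomial$(n,t)$, and Hoeffding's inequality applied to the $[0,1]$-valued summands $\mathbf{1}_{U_i \leq t}$ immediately gives the pointwise bound $\mathbb{P}(|U_n(t) - t| \geq \epsilon) \leq 2 e^{-2n\epsilon^2}$. This is exactly the target bound, but only at a single $t$.

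The hard part, and the reason this result is usually attributed jointly to Dvoretzky, Kiefer, and Wolfowitz and, for the optimal constant $2$ in the exponent, to Massart, is promoting the pointwise bound to a uniform bound over all $t \in [0,1]$ without paying a union-bound penalty. My plan is to follow Massart's martingale argument: encode the process $t \mapsto U_n(t) - t$ through the order statistics $U_{(1)} \leq \cdots \leq U_{(n)}$ of the sample, note that the deviations $U_n(t) - t$ can only be extremal at the $n$ jump points, and then control the exponential moments of the resulting finite-dimensional process by a sharpened Bennett-type inequality whose normalization is chosen so that the final constant matches the pointwise Hoeffding bound. A slicker but weaker alternative would be to discretize $[0,1]$ into $\lceil 1/\epsilon \rceil$ subintervals, apply Hoeffding and a union bound at the grid, and use monotonicity of $U_n$ and of $t$ to bound the oscillation of $U_n(t) - t$ between grid points; this recovers the correct exponential rate but with a slightly larger multiplicative constant.

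Finally, inversion is routine: setting $2 e^{-2 n \epsilon^2} = \alpha$ and solving gives $\epsilon = \sqrt{\ln(2/\alpha)/(2n)}$, and the complement of the exceptional event is precisely the two-sided band $F_n(x) - \epsilon \leq F(x) \leq F_n(x) + \epsilon$ holding for every $x$, as claimed.
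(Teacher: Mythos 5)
The paper does not prove this lemma at all: it is a classical result quoted verbatim with a citation to Dvoretzky, Kiefer, and Wolfowitz, so there is no in-paper argument to compare yours against. Judged on its own terms, your outline correctly identifies the standard architecture of the proof --- reduction to the uniform case by the probability integral transform, the pointwise Hoeffding bound $\mathbb{P}(|U_n(t)-t|\geq \epsilon)\leq 2e^{-2n\epsilon^2}$, and the inversion $2e^{-2n\epsilon^2}=\alpha$ yielding $\epsilon=\sqrt{\ln(2/\alpha)/(2n)}$ --- and you are right that the entire difficulty lives in upgrading the pointwise bound to a supremum over $t$ without degrading the constant $2$ in front of the exponential (that sharp constant is due to Massart, not to the original DKW paper, which had an unspecified constant $C$).

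The gap is that this hard step is announced rather than executed: ``follow Massart's martingale argument'' is a citation in different clothing, and the fallback you offer (discretize into $\lceil 1/\epsilon\rceil$ cells, union bound, control the oscillation between grid points) provably does not deliver the stated $\epsilon$ --- it yields a bound of the form $C(\epsilon)\,e^{-2n\epsilon^2}$ with a prefactor growing like $1/\epsilon$, so inverting it gives a strictly larger band than $\sqrt{\ln(2/\alpha)/(2n)}$. One further small point: as printed, the lemma reads as a statement about a single $x$, for which your pointwise Hoeffding step already suffices and is a complete, elementary proof; it is only the uniform-in-$x$ version (which is what the paper actually needs, since the empirical distribution is compared to the true one across its whole support) that requires the machinery you defer. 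If you intend a self-contained proof of the uniform statement with the sharp constant, you must either carry out Massart's argument in detail or accept the citation, exactly as the paper does.
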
{}

This provides a way to assess the accuracy of our empirical results with high probability. For all distributions we evaluate the number of real solutions of the power flow equations on at least $1.4$ million samples, implying by Lemma \ref{DKWinequality} that with $99\%$ probability, the true cumulative distribution function is within $\epsilon = 0.0005$ of what is listed.

In addition to computing empirical distributions, we would like to visualize these distributions via the space of susceptances. \\
We consider a \textit{solution region} to be a region in the space of susceptances where the number of real solutions is fixed. This notion is analogous to \textit{cylindrical algebraic decomposition} used in computer vision and real algebraic geometry \cite{caviness2004quantifier}. Algorithms for computing these decompositions exist, but become computationally unattainable for networks on more than a few nodes. Instead we sample susceptances on a unit hyperphere and count the number of real solutions to the power flow equations for these values. We then assign a color to each number of real solutions and we color susceptances on the sphere according to this scheme.

\subsection{Cyclic Networks}\label{sec41}

The distribution of the number of nontrivial real solutions for $C_3$ was completely solved and for $C_4$ was closely analyzed in \cite{zachariah2018distributions} by using Mathematica to symbolically solve the entire system. The authors proved that the distribution for $C_3$ is given by
 \begin{align}
     \mathbb{P}(\text{number of nontrivial real solutions}=0) &= 3 - \frac{4}{\sqrt{3}} \approx 0.6906 \\
     \mathbb{P}(\text{number of nontrivial real solutions}=2) &= \frac{4}{\sqrt{3}}-2 \approx 0.3904,
 \end{align}{}
 and the distribution for $C_4$ is 
  \begin{align}
     \mathbb{P}(\text{number of nontrivial real solutions}=0) &\approx 0.6945 \\
     \mathbb{P}(\text{number of nontrivial real solutions}=4) &\approx 0.3055.
 \end{align}{}

 Figure $\ref{C3solregions}$ shows the distribution for $C_3$ in the space of susceptances where blue regions are where there are no nontrivial real solutions and red regions are where there are $2$ nontrivial real solutions.
\begin{figure}[h!]
    \centering
    \includegraphics[width =0.3\textwidth]{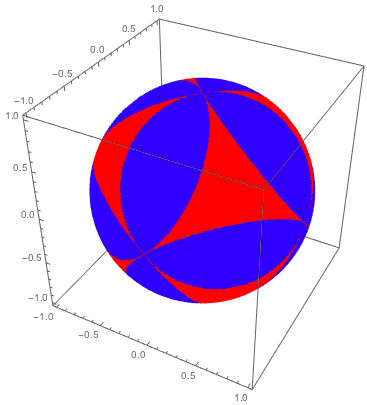}
    \caption{Solution regions of $C_3$}
    \label{C3solregions}
\end{figure}{}

For any graph with more than three edges we need to fix all but three of them in order to visualize the solution regions. Figure \ref{C4solregions} shows two examples for $C_4$ where $b_{01}$ is fixed. 

\begin{figure}
\centering    
\subfigure{\includegraphics[width=0.2\textwidth]{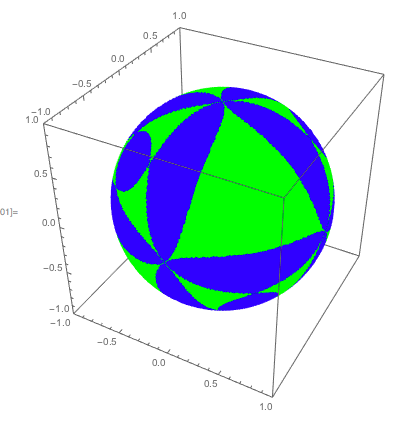}} 
\subfigure{\includegraphics[width=0.2\textwidth]{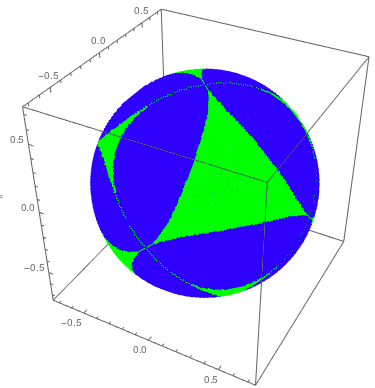}} 
\caption{Solution Regions for $C_4$ with $b_{01}=0.1$ (left) and $b_{01}=0.3$ (right)}
\label{C4solregions}
\end{figure}


We can also visualize solution regions for $C_5$ after fixing two of the susceptances. Examples of this are given in Figure \ref{C5}. In these images we observe a lot of symmetry; this can be explained as the number of real solutions to cyclic networks is unchanged under any permutation of the edges, so also of the susceptances. The color scheme for solution regions of all pictures is given in Table \ref{colorguide}.

\begin{table}[h!]
\caption{Colors of solution regions}
\label{colorguide}
\centering
\begin{tabular}{|c|c|c|c|c|c|}
\hline
$\#$ of Nontrivial Real Solutions & 0&2&4&6\\
Color & \text{Blue} & \text{Red} &\text{Green} &\text{Purple}  \\
\hline 
$\#$ of Nontrivial Real Solutions &8&10&12&14 \\
Color &\text{Yellow} &\text{Black} &\text{Orange} &\text{Pink} \\
\hline 
\end{tabular}
\end{table}

\begin{figure}
\centering    
\subfigure{\includegraphics[width=0.2\textwidth]{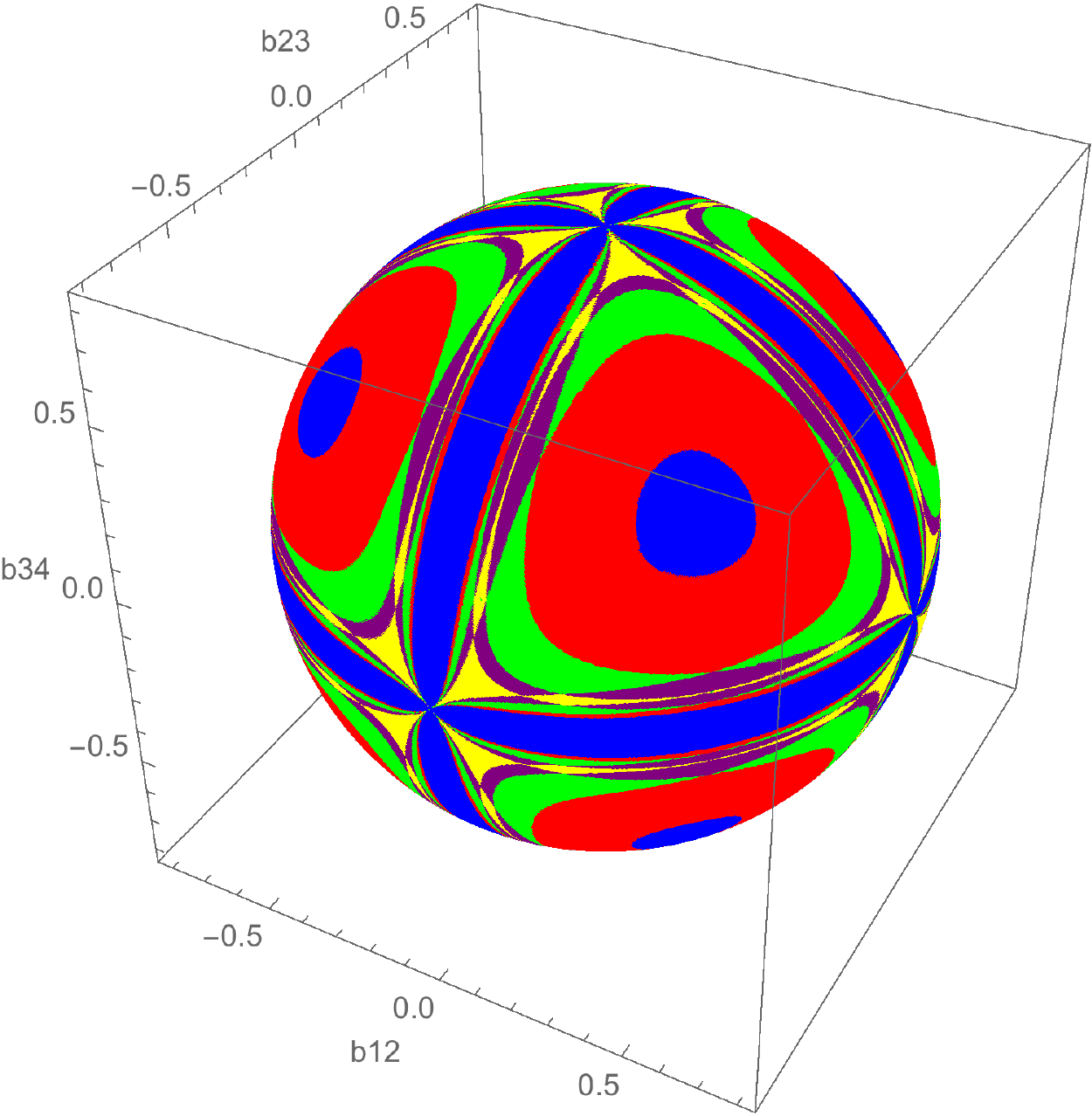}} 
\subfigure{\includegraphics[width=0.2\textwidth]{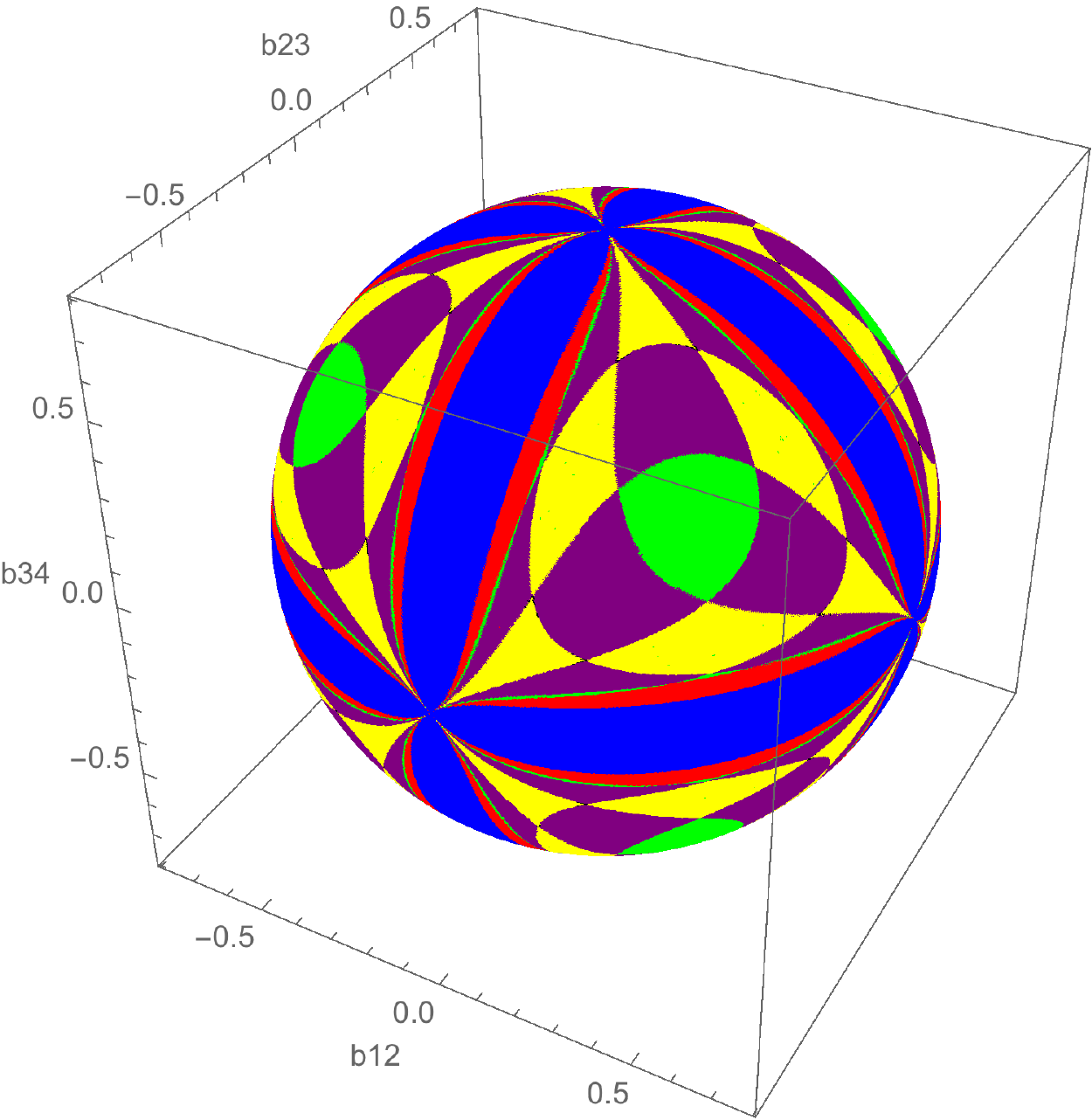}} 
\caption{Solution regions for $C_5$ with $b_{01}=0.5$, $b_{04}=0.3$ (right) and $b_{01}=0.6$, $b_{04}=0.2$ (left)}
\label{C5}
\end{figure}

\begin{figure}
\centering    
\subfigure{\includegraphics[width=60mm]{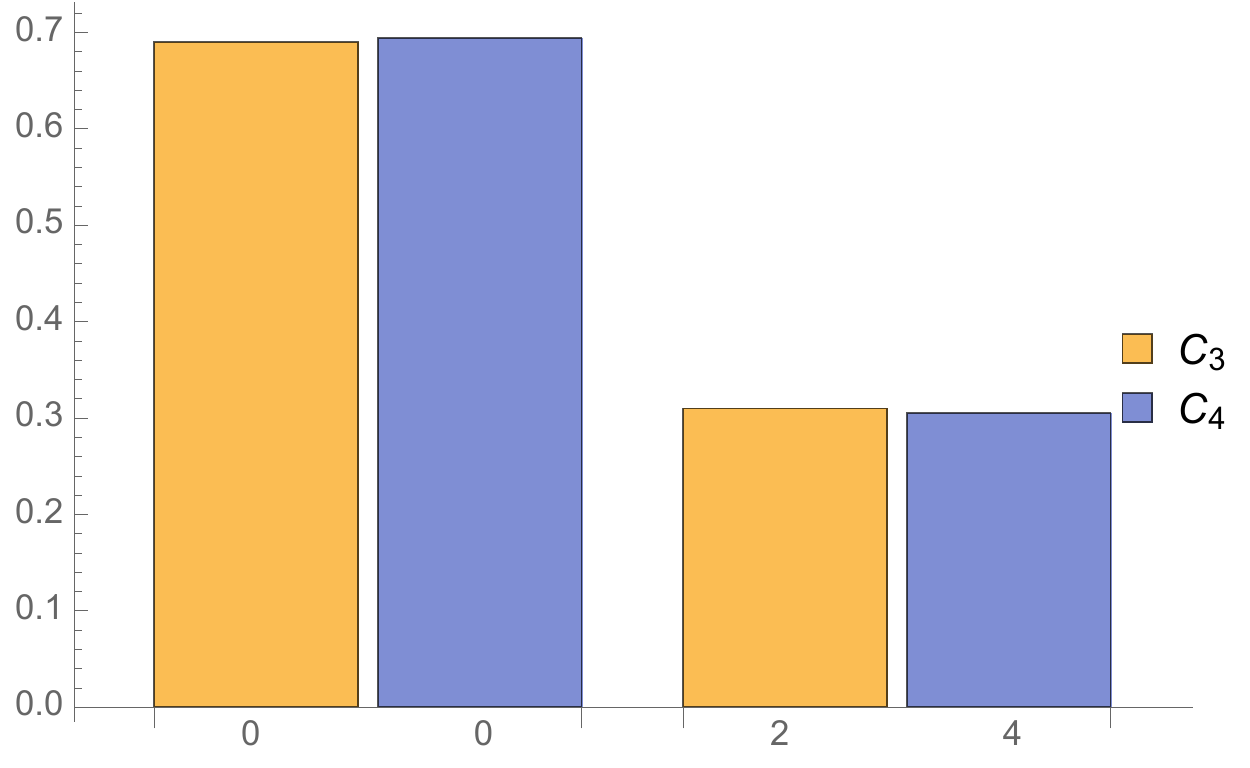}} 
\subfigure{\includegraphics[width=60mm]{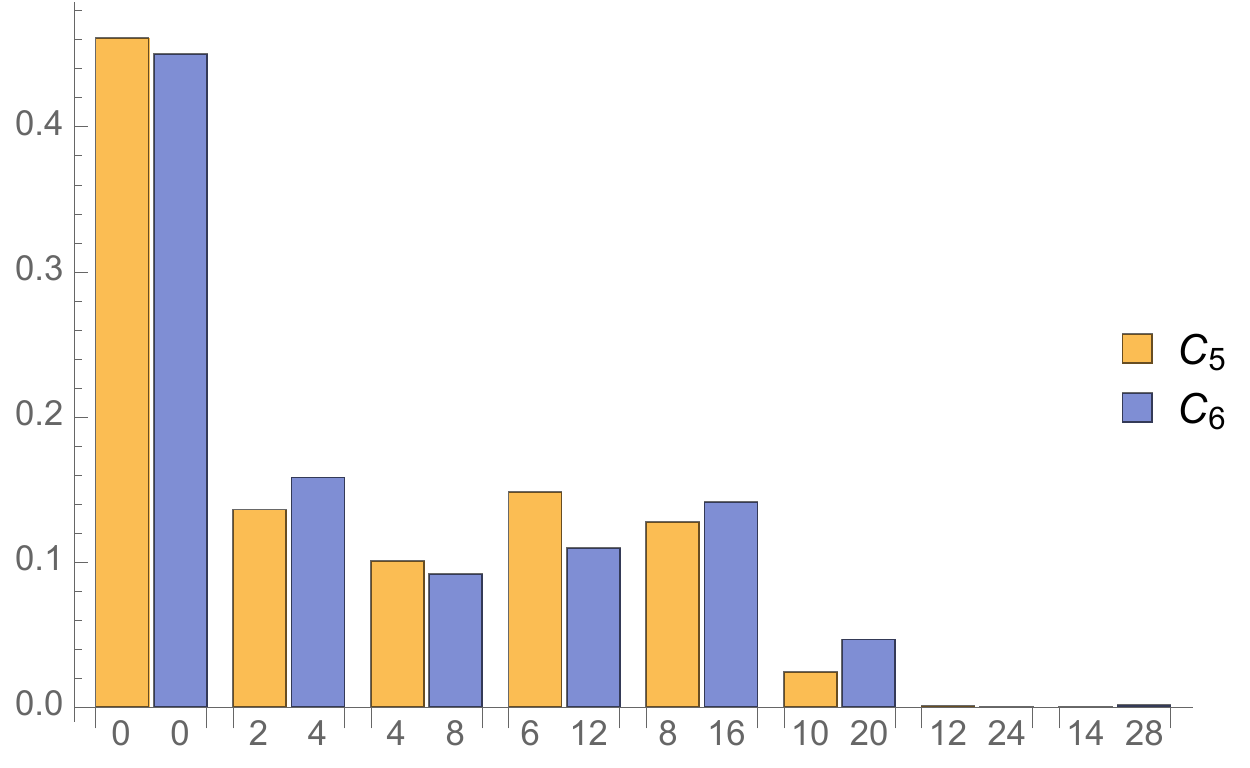}} 
\subfigure{\includegraphics[width=60mm]{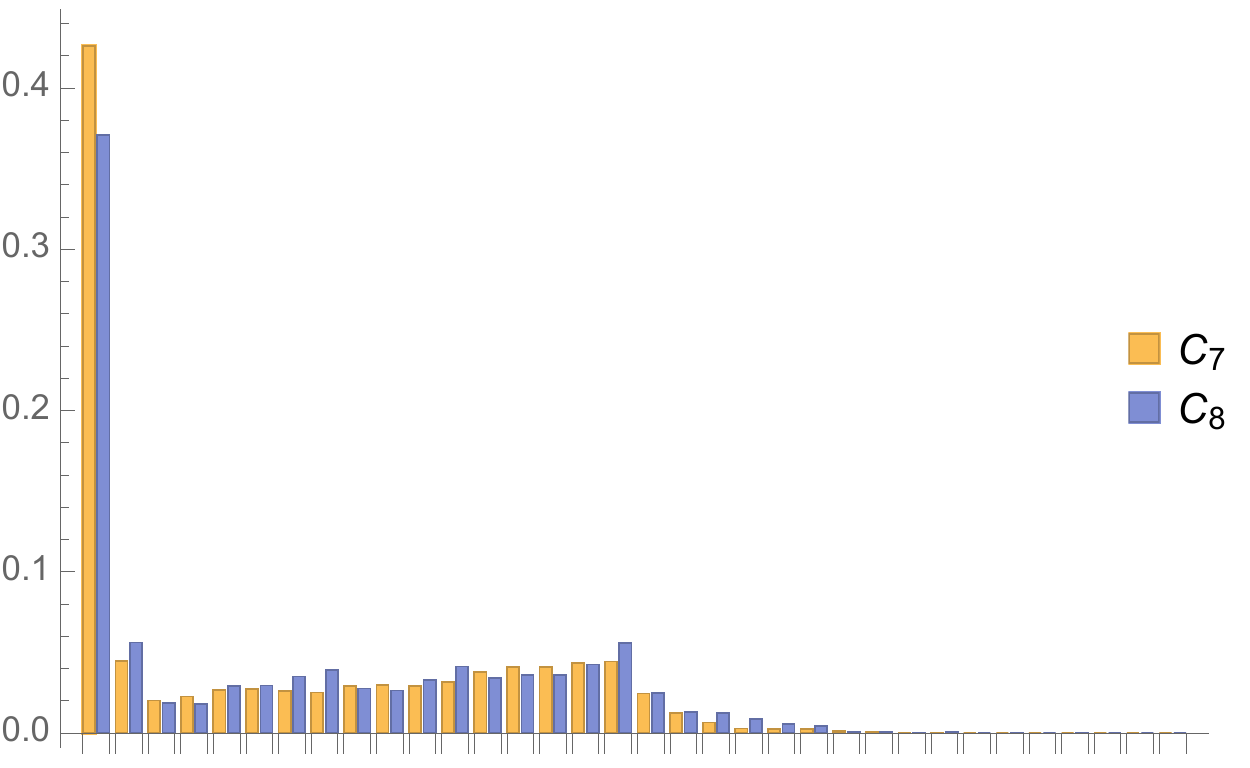}} 
\subfigure{\includegraphics[width=60mm]{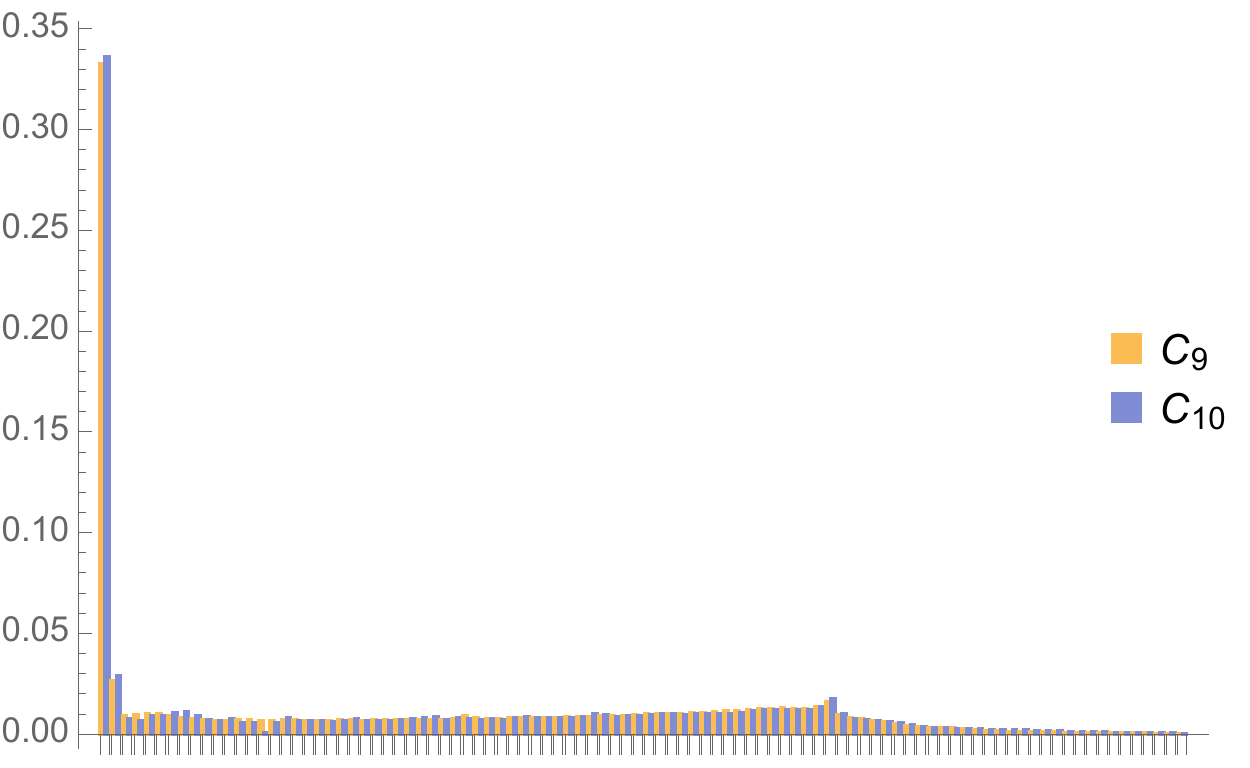}} 
\caption{Distribution of number of nontrivial real solutions for cyclic networks}
\label{cyclic_distributions}
\end{figure}

 Numerical results for $C_6,C_7,C_8,C_9$ and $C_{10}$ are given in Appendix \ref{appendixdata} in Tables~\ref{c6dist}-\ref{c10dist} and are shown graphically in Figure \ref{cyclic_distributions}. We graph the distributions for $C_n,C_{n+1}$ next to each other for $n \in \{3,5,7,9\}$ since the support for $C_{n+1}$ is that of $C_n$ scaled by two. For all cyclic cases we notice a major left skew in the distribution. In addition, we notice that $C_5,\ldots,C_{10}$ are multimodal and $C_n$ and $C_{n+1}$ seem to have similar numbers of modes, although they occur in different places.

In addition, for $C_3,\ldots,C_6$ we find susceptance values where each system attains the maximal number of  real solutions, $n \binom{n-1}{\lfloor \frac{n-1}{2}\rfloor }$. We can generalize this for all cycles.

\begin{theorem} \label{cyclicmaxreals}
 $C_n$ for all $n\geq 3$ has susceptance values that achieve the generic maximum bound of $n \binom{n-1}{\lfloor \frac{n-1}{2}\rfloor }$ real solutions.
\end{theorem}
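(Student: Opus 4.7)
My plan is to produce explicit susceptance values realizing the bound and verify the count directly, using the telescoping structure of the cycle. At each node of $C_n$, the power flow equation reads $b_{k-1,k}\sin\phi_{k-1}=b_{k,k+1}\sin\phi_{k}$ for the angle differences $\phi_k=\theta_{k+1}-\theta_k$, so $b_{k,k+1}\sin\phi_k\equiv c$ for a common constant, subject to the cycle constraint $\sum_k\phi_k\in 2\pi\mathbb{Z}$. I would split into cases based on $n\bmod 4$, using uniform susceptances in the ``easy'' cases and a small one-edge perturbation in the ``hard'' case.

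For $n$ odd or $n\equiv 2\pmod 4$, set every $b_{k,k+1}=1$. Writing $u_k=e^{i\phi_k}$, each $u_k$ is a root of $u^2-2icu-1=0$, whose two roots $r,-1/r$ lie on the unit circle when $c\in(-1,1)$. Labeling the root choice at each edge by $\epsilon\in\{0,1\}^n$ and setting $j=|\epsilon|$, the cycle constraint $\prod_k u_k=1$ becomes $r^{n-2j}=(-1)^j$. When $n$ is odd the exponent is always nonzero; when $n\equiv 2\pmod 4$ it vanishes only at $j=n/2$, and there $(-1)^{n/2}=-1$ makes the equation infeasible. In every feasible case $r$ is an $|n-2j|$-th root of unity, so $|r|=1$, each $\phi_k$ is real, and the corresponding $(x_k,y_k)\in\mathbb{R}^2$. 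Accounting for the free involution $(j,r,\epsilon)\mapsto(n-j,-1/r,\mathbf{1}-\epsilon)$, the total is $\tfrac12\sum_j|n-2j|\binom{n}{j}=n\binom{n-1}{\lfloor(n-1)/2\rfloor}$, via the telescoping identity $(n-2j)\binom{n}{j}=n\binom{n-1}{j}-n\binom{n-1}{j-1}$; matching the $\mathbb{C}^*$ bound forces every complex solution to be real.

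For $n\equiv 0\pmod 4$ the uniform choice degenerates: at $j=n/2$ the equation $r^0=1$ is satisfied identically, giving an infinite real family. I would break this by taking $b_{k,k+1}=1$ for $k<n-1$ and $b_{n-1,0}=1+\delta$ for small $\delta>0$. Setting $\alpha=\arcsin c$, $\beta=\arcsin(c/(1+\delta))$ and splitting $\epsilon=(j_0,\epsilon_{n-1})$, the cycle condition becomes $(n-1-2j_0)\alpha+(1-2\epsilon_{n-1})\beta=(2m-|\epsilon|)\pi$. When $|\epsilon|=n/2$ the two coefficients sum to zero and, since $4\mid n$, only $m=n/4$ is admissible; this forces $\alpha=\beta$ and hence $c=0$, contributing exactly $\binom{n}{n/2}$ trivial solutions. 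For $|\epsilon|\ne n/2$ one counts the integers $m$ with $(2m-|\epsilon|)\pi$ in the image of the left-hand side as $c$ sweeps $(-1,1)$; summed over $\epsilon$ this tally yields the remaining $n\binom{n-1}{n/2-1}-\binom{n}{n/2}$ needed to meet the bound, and every such $c$ is real since it arises from a real equation.

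The main technical obstacle is making the $n\equiv 0\pmod 4$ range count rigorous. The coefficients $\sigma_0=n-1-2j_0$ and $\sigma_1=1-2\epsilon_{n-1}$ can carry opposite signs, so monotonicity of the left-hand side in $c$ is not automatic, and at $\delta=0$ several ``boundary'' solutions sit at $c=\pm 1$ and collide with the $j=n/2$ infinite family. The cleanest justification is a two-step verification: first, a sign analysis on the derivative $\sigma_0/\sqrt{1-c^2}+\sigma_1/((1+\delta)\sqrt{1-c^2/(1+\delta)^2})$ showing that for small $\delta>0$ the left-hand side crosses each admissible target exactly once, so that real $c$-solutions are counted by the corresponding integers $m$; second, a first-order perturbation argument at $c=0$ confirming that the $j=n/2$ family collapses only to $c=0$. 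Once the resulting real count equals $n\binom{n-1}{\lfloor(n-1)/2\rfloor}$, it must match the $\mathbb{C}^*$ bound, so every complex solution is real and the theorem follows.
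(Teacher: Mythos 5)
Your construction for $n$ odd and $n\equiv 2\pmod 4$ is essentially the paper's own argument in multiplicative dress: all susceptances equal to $1$, a two-way branch choice per edge, the cycle constraint reducing to a root-of-unity condition, and the telescoping identity $(n-2j)\binom{n}{j}=n\binom{n-1}{j}-n\binom{n-1}{j-1}$ giving the total $n\binom{n-1}{\lfloor (n-1)/2\rfloor}$ (the paper works additively with $u_k=(\theta_k-\theta_{k-1})/\pi$ and sums $\binom{n}{m}(n-2m)$ over $m\le\lfloor(n-1)/2\rfloor$ instead of halving a symmetric sum, but the content is identical). That half of your proposal is sound; one small point worth stating is that the two roots $r$ and $-1/r$ never coincide in these cases ($r=\pm i$ would force $4\mid n$), so the branch data is recoverable from the angles and your free involution really does pair off the data two-to-one.

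The genuine gap is the case $4\mid n$. There you replace the degenerate uniform choice by $b_{n-1,0}=1+\delta$, but the decisive step --- showing that the number of admissible integers $m$ per branch $(j_0,\epsilon_{n-1})$, summed against $\binom{n-1}{j_0}$, equals exactly $n\binom{n-1}{n/2-1}$ --- is asserted, not carried out; you yourself flag it as the ``main technical obstacle,'' so as written the theorem is not established for these $n$. The obstacle is in fact surmountable: since $|\sigma_0|=|n-1-2j_0|$ is odd, hence $\ge 1=|\sigma_1|$, and $1/\sqrt{1-c^2}>1/\sqrt{(1+\delta)^2-c^2}$ for $\delta>0$, the map $c\mapsto \sigma_0\arcsin c+\sigma_1\arcsin\bigl(c/(1+\delta)\bigr)$ is strictly monotone on $[-1,1]$ with endpoint values $\pm\bigl((\sigma_0+\sigma_1)\pi/2-\sigma_1\eta\bigr)$, $\eta=\pi/2-\arcsin\bigl(1/(1+\delta)\bigr)$; a short parity analysis then gives per-branch counts $|n/2-1-j_0|$ for $\epsilon_{n-1}=0$ and $|n/2-j_0|$ for $\epsilon_{n-1}=1$, and $\sum_{j_0}\binom{n-1}{j_0}\,|n-1-2j_0|=n\binom{n-1}{n/2-1}$ closes the count --- but none of this appears in your write-up, and without it the ``tally yields the remaining solutions'' claim is unsupported. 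The paper avoids the perturbation entirely by flipping one sign, taking $b_{01}=-1$ and all other susceptances $1$: the branch equations stay exact ($u_1=-u$ or $1+u$, $u_k=u$ or $1-u$), the solution count per branch is the integer $|2+2m-n|$, and the only analytic input is the combinatorial identity of Lemma~\ref{binomequality}. That route is cleaner than a $\delta$-perturbation off the degenerate configuration of Lemma~\ref{cmod4infinite}, which is exactly where your boundary collisions at $c=\pm1$ and the collapsing $j=n/2$ family make the bookkeeping delicate. Finally, your closing inference ``the real count equals the $\mathbb{C}^*$ bound, hence every complex solution is real'' is unnecessary for the statement: exhibiting $n\binom{n-1}{\lfloor(n-1)/2\rfloor}$ distinct real solutions already achieves the bound.
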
{}
\begin{proof}
First consider the case of $C_n$ where $4 \nmid n$. Set all susceptances equal to $1$. The system of equations defined in \eqref{trigpfeqs1} with $|V_k| = 1, g_{km} = 0$ becomes 
\begin{align}
    \sin(\theta_k - \theta_{k-1}) &= \sin(\theta_{k+1}-\theta_k) \label{equalbij}
\end{align}
for $k=1,\ldots,n$. Consider the change of variables $u_k = \frac{\theta_k - \theta_{k-1}}{\pi}$ for $k=1, \ldots, n$. This transforms \eqref{equalbij} into
\begin{align}
    \sin(\pi u_k) &= \sin( \pi u_{k+1}) \\
    \sum_{k=1}^n u_k &\equiv 0 \mod 2 \label{summod2}
\end{align}
for $k=1,\ldots, n$. This means that for all $k,m$, $u_k = u_m$ or $u_k = 1-u_m$. This allows us to partition the set $S = \{u_1,\ldots,u_n\}$ into two sets: $S_1 = \{ u_k \in S : u_k = 1-u\}$ and $S_2 = \{u_k \in S : u_k = u\}$ for some $u \in \mathbb{R}$. Let $|S_1| = m$ and $|S_2| = n-m$. 

By \eqref{summod2} we have $(n-2m)u + m \equiv 0 \mod 2$. Suppose $n-m$ is odd. This gives $(n-2m)u \equiv 1 \mod 2$. There are $n-2m$ different $u$ that satisfy this, namely $u = \frac{s}{n-2m}$ for $s \in \{1,3,5,\ldots, 2(n-2m)-1\}$. Now suppose $n-m$ is even. By \eqref{summod2} we have $(n-2m)u \equiv 0 \mod 2$. There are $n-2m$ solutions to this equation, namely $u = \frac{s}{n-2m}$ where $s \in \{0,2,4,\ldots, 2(n-2m-1)\}$. In either case, there are $\binom{n}{m}$ ways to construct $S_1$, giving $\binom{n}{m}(n-2m)$ such solutions $u$ for each $m \leq k$. This gives $\sum_{m=0}^k \binom{n}{m}(n-2m) = (k+1) \binom{n}{k+1} = n \binom{n-1}{k}$ real solutions where the first equality is $(5.18)$ of \cite{graham1994concrete} and the second equality is $(1.2)$ of \cite{combinatorialidentities}. 


Now consider $C_n$ for $n = 4k$ for $k\in \mathbb{N}$. As per the proof of \ref{cmod4infinite}, the previous choice of susceptances produces infinitely many solutions. So instead, consider susceptances $b_{01} = -1$ and $b_{ij} = 1$ for all other edges $ij$. Using the same notation as above, we have that $u_1 = -u$ or $u_1 = 1+u$ and $u_k = u$ or $u_k = 1-u$ for all $2 \leq k \leq n$. For all $k \geq 2$, let $S_1 = \{u_k \in S \backslash u_1 : u_k = 1-u \}$ and $S_2 = \{ u_k \in S \backslash u_1 : u_k = u \}$. Note that for $u_1 = -u$, $|S_1| = m$, and $|S_2| = n-m-1$ \eqref{summod2} gives $(2+2m-n) u \equiv m \mod 2$. Similarly, for $u_1 = 1+u, |S_1| = n-m-1$ and $|S_2| = m$ \eqref{summod2} also gives $(2+2m-n) u \equiv m \mod 2$ so the solutions to the two cases are redundant. Therefore, without loss of generality we suppose $u_1 = -u, |S_1| = m$ and $|S_2| = n-m-1$. When $m$ is odd \eqref{summod2} gives $(2+2m-n) u \equiv 1 \mod 2$. This equation has $|2+2m-n|$ solutions $\mod 2$, namely $u = \frac{s}{2+2m-n}$ for $s \in \{1,3,5,\ldots, 2(2+2m-n)-1\}$. When $m$ is even we want to find all solutions to $(2+2m-n) u \equiv 0 \mod 2$. Again, this equation has $|2+2m-n|$ solutions $\mod 2$, namely $u = \frac{s}{2+2m-n}$ for $s \in \{0,2,4,\ldots, 2(2m-n+1)\}$. For each $m \leq n-1$ there are $\binom{n-1}{m}$ ways to construct $S_1$. This gives a total of $\sum_{m=0}^{n-1} \binom{n-1}{m}|2+2m-n| = n \binom{n-1}{k-1}$ real solutions where the equality is proven below in Lemma~\ref{binomequality}.
\end{proof}

\begin{lem} \label{binomequality}
 $\sum_{m=0}^{2k-1} \binom{2k-1}{m}|2+2m-2k| = 2k\binom{2k-1}{k-1}$.
\end{lem}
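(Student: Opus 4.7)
My plan is to reduce the sum to a standard partial-binomial-sum calculation. First I would set $n=2k-1$ so that the claim reads $\sum_{m=0}^{n}\binom{n}{m}|2m-(n-1)|=2k\binom{n}{k-1}$, or equivalently
$$\sum_{m=0}^{2k-1}\binom{2k-1}{m}\bigl|m-(k-1)\bigr| = k\binom{2k-1}{k-1}.$$
Since $|m-(k-1)|=(m-k+1)_{+}+(k-1-m)_{+}$, I would split the sum at $m=k-1$ and apply the symmetry $\binom{2k-1}{m}=\binom{2k-1}{2k-1-m}$ to the lower half (via the substitution $m\mapsto 2k-1-m$). This collapses both halves into a sum over $m\ge k$:
$$\sum_{m=0}^{2k-1}\binom{2k-1}{m}|m-(k-1)| \;=\; \sum_{m=k}^{2k-1}\binom{2k-1}{m}(2m-2k+1).$$

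Next I would apply the reflection $m\mapsto 2k-1-\ell$ once more to rewrite the right-hand side as
$$\sum_{\ell=0}^{k-1}\binom{2k-1}{\ell}(2k-1-2\ell) \;=\; (2k-1)\sum_{\ell=0}^{k-1}\binom{2k-1}{\ell} \;-\; 2\sum_{\ell=0}^{k-1}\ell\binom{2k-1}{\ell}.$$
Now I would evaluate the two partial sums using two classical facts: (i) by symmetry of the binomial coefficients of odd order, $\sum_{\ell=0}^{k-1}\binom{2k-1}{\ell}=2^{2k-2}$, and (ii) the absorption identity $\ell\binom{2k-1}{\ell}=(2k-1)\binom{2k-2}{\ell-1}$, which, combined with the even-order symmetry $\sum_{j=0}^{k-2}\binom{2k-2}{j}=\tfrac12\bigl(2^{2k-2}-\binom{2k-2}{k-1}\bigr)$, gives $\sum_{\ell=0}^{k-1}\ell\binom{2k-1}{\ell}=\tfrac{2k-1}{2}\bigl(2^{2k-2}-\binom{2k-2}{k-1}\bigr)$.

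Substituting these in, the $2^{2k-2}$ terms cancel and the expression reduces to $(2k-1)\binom{2k-2}{k-1}$. Finally I would verify the elementary identity $(2k-1)\binom{2k-2}{k-1}=k\binom{2k-1}{k-1}$ by writing each side as $(2k-1)!/((k-1)!)^{2}$. Multiplying by the factor of $2$ dropped at the beginning recovers the claimed total $2k\binom{2k-1}{k-1}$. The only delicate step is the bookkeeping in the reflection argument that collapses the two halves of the absolute value; once that is done, everything else is routine manipulation of binomial coefficients.
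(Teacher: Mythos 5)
Your argument is correct; I checked each reflection and each partial-sum evaluation and the bookkeeping goes through (in particular, the $m=k$ term you silently absorb when merging the two halves contributes $0$, so the collapse to $\sum_{m=k}^{2k-1}\binom{2k-1}{m}(2m-2k+1)$ is legitimate). It is, however, a genuinely different route from the paper's. The paper also splits the sum at the sign change of $2+2m-2k$, but then evaluates each one-sided piece by invoking the closed-form partial-sum identity (5.18) of Graham--Knuth--Patashnik, $\sum_{m\le M}\binom{r}{m}\bigl(\tfrac{r}{2}-m\bigr)=\tfrac{M+1}{2}\binom{r}{M+1}$, together with the full-range version of the same identity to relate the two pieces; the resulting manipulation is short but leans entirely on that quoted identity. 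You instead use the reflection symmetry $\binom{2k-1}{m}=\binom{2k-1}{2k-1-m}$ twice to fold the absolute value into a single one-sided sum, and then finish with only the half-sum evaluations $\sum_{\ell=0}^{k-1}\binom{2k-1}{\ell}=2^{2k-2}$, $\sum_{j=0}^{k-2}\binom{2k-2}{j}=\tfrac12\bigl(2^{2k-2}-\binom{2k-2}{k-1}\bigr)$, and the absorption identity. What your approach buys is self-containedness: every ingredient is an elementary symmetry or absorption fact, and the cancellation of the $2^{2k-2}$ terms makes the mechanism transparent. What the paper's approach buys is brevity on the page, at the cost of outsourcing the real content to a reference. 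Both are sound; yours would arguably be easier for a reader to verify without a copy of \emph{Concrete Mathematics} at hand.
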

\begin{proof}
Without the absolute value, we see $\sum_{m=0}^{2k-1} \binom{2k-1}{m}|2+2m-2k|$ is equal to
\begin{align}
& \sum_{m=0}^{k-1}\binom{2k-1}{m}(2k-2m-2) \label{binom1}\\ + \  &\sum_{m=k-1}^{2k-1} \binom{2k-1}{m}(2+2m-2k) \label{binom2}
\end{align}
Applying $(5.18)$ of \cite{graham1994concrete} to \eqref{binom1} we see that \eqref{binom1} is equal to
\begin{align}
 k \binom{2k-1}{k}- \sum_{m=0}^{k-1}\binom{2k-1}{m} = k \binom{2k-1}{k-1}- \sum_{m=0}^{k-1}\binom{2k-1}{m} \label{identity1}
\end{align}
Again, using $(5.18)$ of \cite{graham1994concrete} we get the identity
\begin{align}
    \sum_{m=0}^{2k-1}\binom{2k-1}{m}(2k-1-2m) &= 2k\binom{2k-1}{2k} = 0
\end{align}
This gives the identity for \eqref{binom2} as 
\begin{align}
 \sum_{m=0}^{2k-1}\binom{2k-1}{m} - \sum_{m=0}^{k-1}\binom{2k-1}{m}(2+2m-2k) \label{identity2}
\end{align}
Adding \eqref{identity1} and \eqref{identity2} we see
\begin{align}
    \sum_{m=0}^{2k-1} &\binom{2k-1}{m}|2+2m-n| = k\binom{2k-1}{k-1}-\sum_{m=0}^{k-1}\binom{2k-1}{m}\\&+\sum_{m=0}^{2k-1}\binom{2k-1}{m} - \sum_{m=0}^{k-1}\binom{2k-1}{m}(2+2m-2k) \label{binom3}
\end{align}
Applying \eqref{identity1} to the last term in \eqref{binom3} gives
\begin{align*}
    \sum_{m=0}^{2k-1} &\binom{2k-1}{m}|2+2m-n| = 2k \binom{2k-1}{k-1} \\ &- 2 \sum_{m=0}^{k-1}\binom{2k-1}{m}
    +\sum_{m=0}^{2k-1}\binom{2k-1}{m} \\
    &= 2k \binom{2k-1}{k-1} - \sum_{m=0}^{k-1}\binom{2k-1}{m}
    +\sum_{m=k}^{2k-1}\binom{2k-1}{m} \\
    &= 2k \binom{2k-1}{k-1}
\end{align*}
\end{proof}

Since in our trial of $1.4$ million samples we did not sample any susceptances for $C_7-C_{10}$ that gave the maximum number of real solutions, we suspect that such susceptance values occur with very small probability (but they do occur as our proof is constructive). In the final case of Theorem \ref{cyclicmaxreals} we switch from the susceptances being equal. The following lemma explains why.

\begin{lem} \label{cmod4infinite}
 There exist susceptance values for $C_n$, $4 \mid n$, where there are infinitely many real solutions.
\end{lem}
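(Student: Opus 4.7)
The plan is to exhibit an explicit one-parameter family of real solutions when the susceptances are all equal, using the same change of variables as in the proof of Theorem \ref{cyclicmaxreals}. Set $b_{ij} = 1$ for every edge of $C_n$. The trigonometric form of the power flow equations then collapses to $\sin(\theta_k-\theta_{k-1}) = \sin(\theta_{k+1}-\theta_k)$ for each $k$, and after the substitution $u_k = (\theta_k - \theta_{k-1})/\pi$ we obtain $\sin(\pi u_k) = \sin(\pi u_{k+1})$ together with the closure condition $\sum_{k=1}^n u_k \equiv 0 \pmod 2$.

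Next I would exploit the same dichotomy used in Theorem \ref{cyclicmaxreals}: for every pair $k,m$ one has either $u_k = u_m$ or $u_k = 1 - u_m$, so the indices split into a set $S_1$ on which $u_k = 1-u$ and a set $S_2$ on which $u_k = u$ for some real parameter $u$. If $|S_1| = m$, the closure condition reduces to $(n-2m)u + m \equiv 0 \pmod 2$. The key observation is that when $n = 4k$ we may take $m = n/2 = 2k$: then the coefficient $n-2m$ of $u$ vanishes, and since $m = 2k$ is even, the congruence $m \equiv 0 \pmod 2$ holds automatically. Consequently the closure condition is satisfied for \emph{every} value of $u \in \mathbb{R}$.

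It then remains to verify that as $u$ varies over a continuous range, the resulting $(x_k,y_k) = (\cos\theta_k,\sin\theta_k)$ trace out infinitely many distinct real solutions of \eqref{pfeq1}--\eqref{pfeq2}. This is immediate: fix any balanced partition $S_1 \sqcup S_2$ of $\{1,\dots,n\}$ with $|S_1| = |S_2| = 2k$, set $u_j = 1-u$ for $j\in S_1$ and $u_j = u$ for $j \in S_2$, and recover $\theta_k = \pi \sum_{j\le k} u_j$. The map $u \mapsto (\theta_1(u),\dots,\theta_{n-1}(u))$ is non-constant, so its image under $\theta \mapsto (\cos\theta,\sin\theta)$ contains infinitely many points, producing an infinite family of real solutions.

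I do not expect any genuine obstacle: the heart of the argument is the numerical coincidence that $4 \mid n$ makes both $n-2m$ and $m$ vanish modulo $2$ at $m = n/2$, which is exactly the degenerate case the proof of Theorem \ref{cyclicmaxreals} had to avoid. The only care needed is in ruling out the possibility that the one-parameter family collapses to a finite set, which is handled by the continuity argument above.
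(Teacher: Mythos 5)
Your proposal is correct and follows essentially the same route as the paper: set all susceptances to $1$, pass to the variables $u_k = (\theta_k-\theta_{k-1})/\pi$, take a balanced partition with $u_k = u$ on one half and $u_k = 1-u$ on the other, and observe that the closure condition reduces to $n/2 \equiv 0 \pmod 2$, which holds exactly when $4 \mid n$, leaving $u$ free. Your added remark that the resulting one-parameter family is non-constant (hence genuinely infinite) is a small extra verification the paper leaves implicit, but it does not change the argument.
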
{}
\begin{proof}
Set all susceptances equal to $1$. The system of equations as defined in $(\ref{trigpfeqs1})$ with $|V_k|=1, g_{km} = 0$ becomes for $k=1,\ldots, n$

\begin{align}
    \sin(\theta_k - \theta_{k-1}) &= \sin(\theta_{k+1}-\theta_k)
\end{align}{}

where the indices wrap around $\mod n$. Set $u_k = \frac{\theta_k-\theta_{k-1}}{\pi}$ for $k=1\ldots, n$. Under these  coordinates we know that $\sum_{k=1}^{n} u_k \equiv 0 \mod 2$ and $u_k=u_l$ or $u_k = 1-u_l$ for all $k,l$. Now partition the set $\{u_1,\ldots,u_n\}$ into two equal size sets $S_1$ and $S_2$. For all $u_k \in S_1$ set $u_k = u$ for some $u \in \mathbb{R}$. For all $u_k \in S_2$ set $u_k = 1-u$. This implies that $\sum_{k=1}^{n} u_k = \frac{n}{2}u + \frac{n}{2}(1-u) = \frac{n}{2} \equiv 0 \mod 2$ satisfying the first condition. Since we can choose any $u \in \mathbb{R}$, this implies that there are infinitely many real solutions. 
\end{proof}{}

By \cite{mallada2016distributed} we know that with probability one there are finitely many solutions to the power flow equations. This implies that such susceptances where $C_n$ admits infinitely many real solutions lie on a set of measure zero and are not generic.

\subsection{Complete Networks}\label{sec42}
We perform a similar analysis to section \ref{sec41} but this time on complete networks on $4,5,6,7$ and $8$ vertices. The results of these simulations are given in Appendix \ref{appendixdata} in Tables~\ref{k4dist}-\ref{k7dist} and shown graphically in Figure \ref{K4K7dist}.

\begin{figure}
    \centering
    \includegraphics[width =0.5\textwidth]{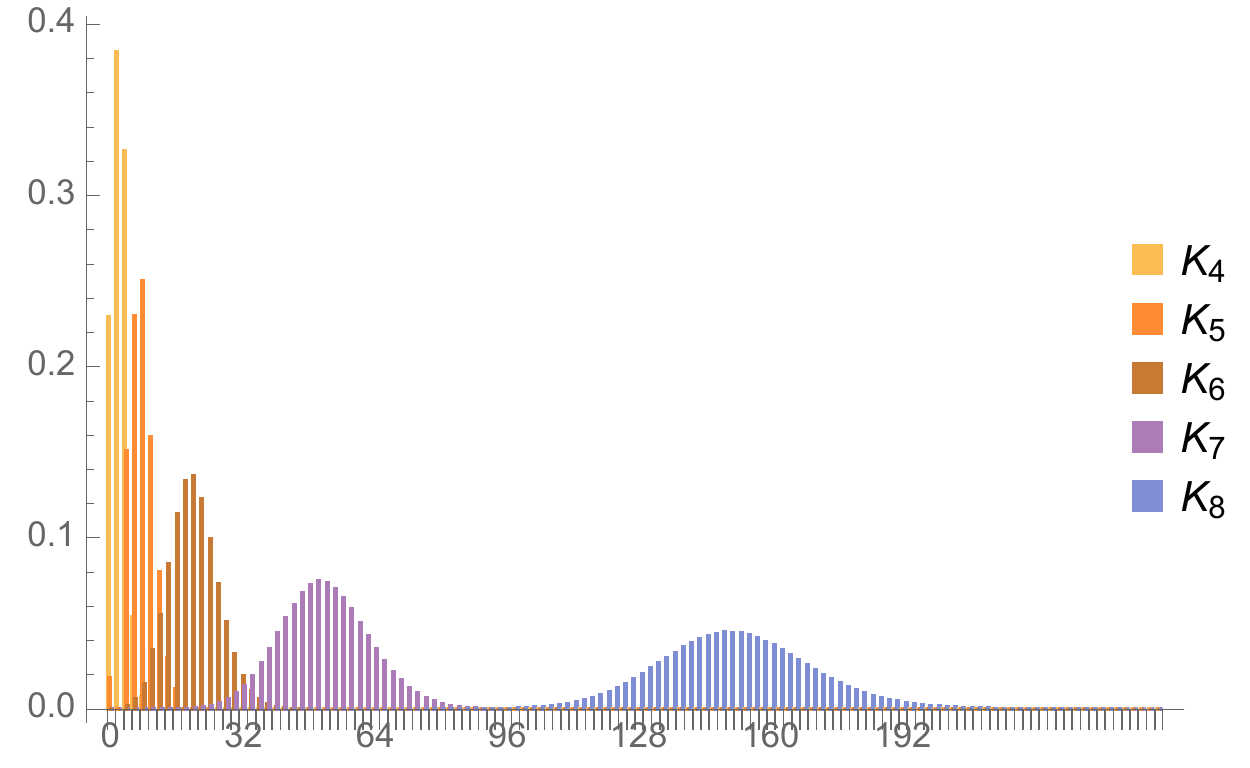}
    \caption{Distribution of the number of nontrivial real solutions for $K_4,K_5,K_6,K_7,K_8$}
    \label{K4K7dist}
\end{figure}{}

In contrast to  the cyclic networks we see the distributions for the complete networks tending to more of a normal shape. While they are still left skewed compared to the range given by the complex bound, there isn't as large of a number of instances with zero nontrivial real solutions. We also see that as $n$ increases the variance becomes much larger and the curve flattens. A major open question for this family of graphs is the following:
\begin{question}
What is the maximum number of real solutions for a complete $n-$node network?
\end{question}{}

We can use the Ballieul and Byrnes bound \cite{baillieul1982geometric} to get an upper bound of $\binom{2n-2}{n-1}$. For $n=3$ this bound is achievable as we see instances of $6$ real solutions. For $n=4$, this bound says we have at most $20$ real solutions but so far we have only found examples of at most $18$  real solutions. It is open as to whether or not this bound is tight for any case $n \geq 4$.

We also study the solution regions for $K_4$ in Figure \ref{K4}. We observe similarities in the shape of some of the solution regions in all cases. In each case there appear to be almost convex, quasi polygonal areas. In contrast to the cyclic cases we don't observe any symmetry. This is explained as the only automorphism of $K_4$ that fix edges $e_{01}, e_{02}$ and $e_{03}$ is the identity mapping.

\begin{figure}
\centering    
\subfigure{\includegraphics[width=0.2\textwidth]{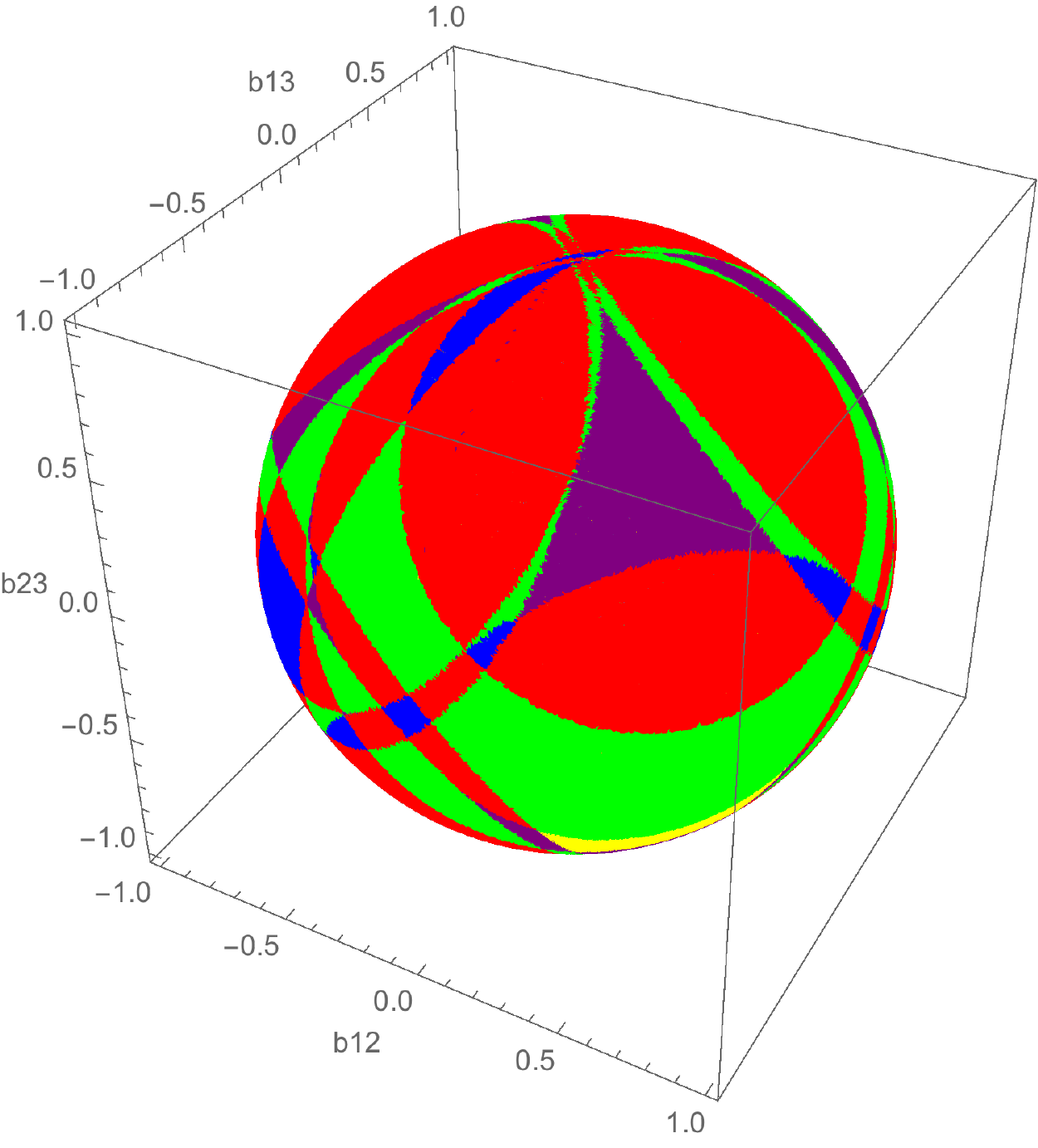}} 
\subfigure{\includegraphics[width=0.2\textwidth]{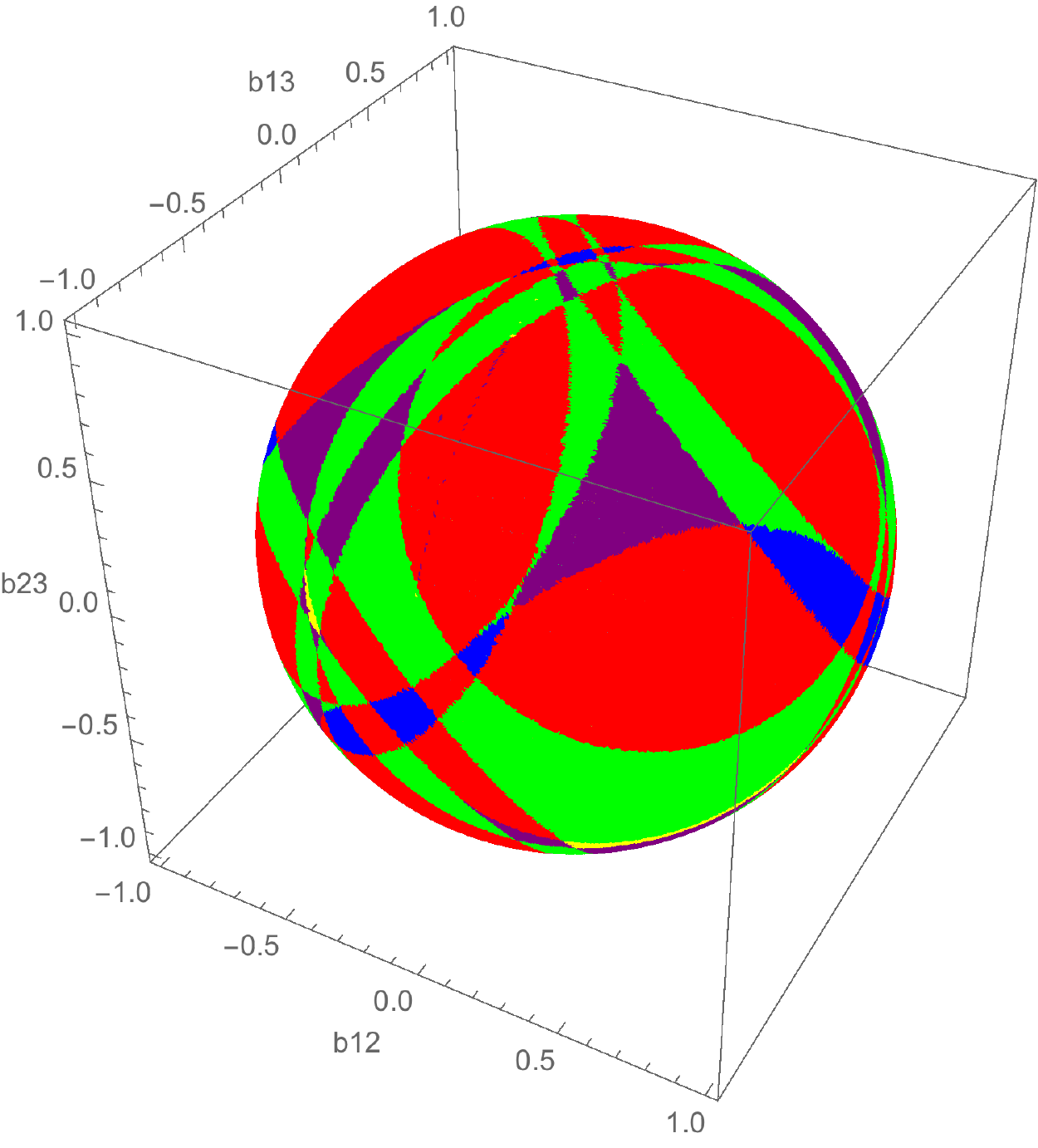}} 
\caption{Solution Regions for $K_4$ with $b_{01}=0.03$, $b_{02}=0.15$, $b_{03} = 0.2$ (right) and $b_{01}=0.1$, $b_{02}=0.2, b_{03} = 0.3$ (left)}
\label{K4}
\end{figure}

\subsection{Number of Real Solutions to Random Polynomials}         
Much of this work was motivated by the observation that the power flow equations generically admit few real solutions compared to the complex bounds. This has been well documented in existing power systems literature \cite{mehta2016numerical,ma1993an}. While we agree that the number of real solutions tends to be low when compared with the total number of complex solutions, we observe that when compared with a random polynomial system, the power flow equations actually admit more real solutions than should be expected! In \cite{rouillier1999solving} it is shown that finding the distribution of the number of real solutions to a system of polynomial equations is the same as finding the distribution to that of a univariate polynomial whose coefficients are polynomials in the coefficients of the original polynomial system. Since we can reduce the distribution of the number of real solutions to the power flow equations to a single univariate polynomial, a natural question then arises.
\begin{question}
How does the distribution of the number of nontrivial real solutions to the power flow equations with $N$ complex solutions compare to that of a random univariate polynomial of degree $N$? 
\end{question}

We compare the distribution of the power flow equations with $N$ nontrivial complex solutions to that of $q(x) = \sum_{i=0}^N c_i x^i$ where $c_i \sim \mathcal{N}(0,1)$. 

Kac's formula \cite{kac1943on} gives a closed form for the expected number of real roots of $q(x)$ as
\begin{align}
    \frac{1}{\pi}\int_{-\infty}^\infty \sqrt{\frac{1}{(t^2-1)^2}-\frac{(N+1)^2t^{2N}}{(t^{2N+2}-1)^2}}dt. \label{kacsformula}
\end{align}{}
    
We can numerically approximate this integral to get the expected number of real solutions for $q$. In the results below, we use \eqref{kacsformula} to get an approximation for the expected number of real roots of $q$ in each case.

\subsubsection{Cyclic Networks} \label{sec431}
 We compare the distribution of the number of real solutions to cyclic $3-10$ node networks to that of random polynomials of corresponding degrees.

 We run numerical simulations using the $\mathtt{CountRoots}$ function in Mathematica to compute $10,000$ trials for $q$. We use (\ref{kacsformula}) to calculate the expected number of real roots for $q$ in Table \ref{t7}.
 
Figure $4$ plots the distributions of the number of nontrivial real solutions for cyclic networks against that of real roots corresponding to random polynomials of appropriate degree. We see that while the cyclic networks seem to give many more instances of zero nontrivial real solutions than random polynomials give of zero real roots, there also seems to be a much higher chance of getting instances of larger numbers of nontrivial real solutions with cyclic networks than with that of a random polynomial. This phenomenon is reflected in Table~\ref{t7} as we see the expected number of nontrivial real solutions  is higher than that of random polynomials for $C_5,\ldots,C_{10}$. We suspect that this is true for all $C_n$, $n \geq 5$, and the gap between the two values will continue to increase.

\begin{table}
\caption{Expected Number of Nontrivial Real Solutions to Cyclic Networks}
\label{t7}
\centering
\begin{tabular}{||c|c||c|c||c|c||c|c||}
\hline
$C_3$ & $0.62$ & $C_4$ & $1.22$ & $C_5$ & $2.85$ & $C_6$ & $5.93$  \\
$q(x)$ & $1.30$  & $q(x)$ & $1.64$ & $q(x)$ & $2.35$ & $q(x)$ & $2.77$  \\
\hline
 $C_7$ & $11.57$ & $C_8$ &$25.57$ & $C_9$ & $52.38$ & $C_{10}$ & $105.40$ \\
$q(x)$ & $2.96$ & $q(x)$ & $3.83$ & $q(x)$ & $4.40$ & $q(x)$ & $4.84$  \\
\hline 
\end{tabular}
\end{table}

\begin{figure}
\centering    
\subfigure{\includegraphics[width=0.2\textwidth]{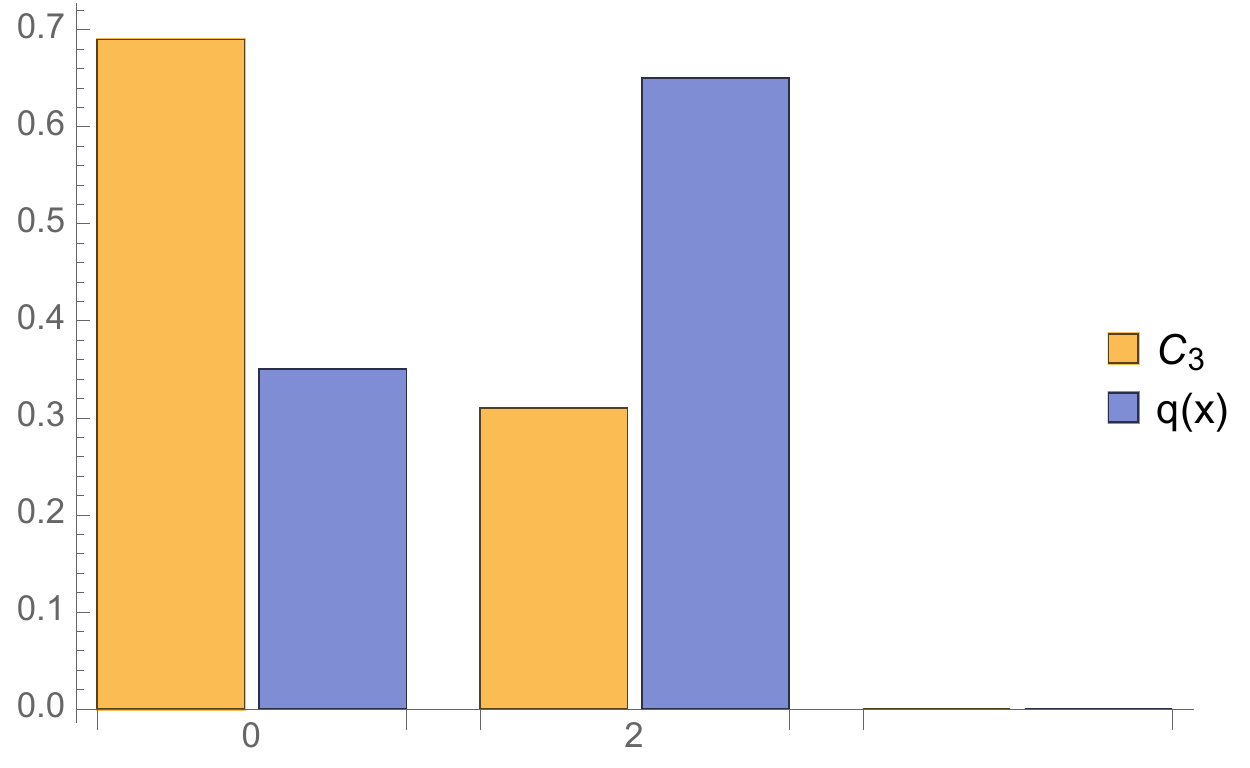}}
\subfigure{\includegraphics[width=0.2\textwidth]{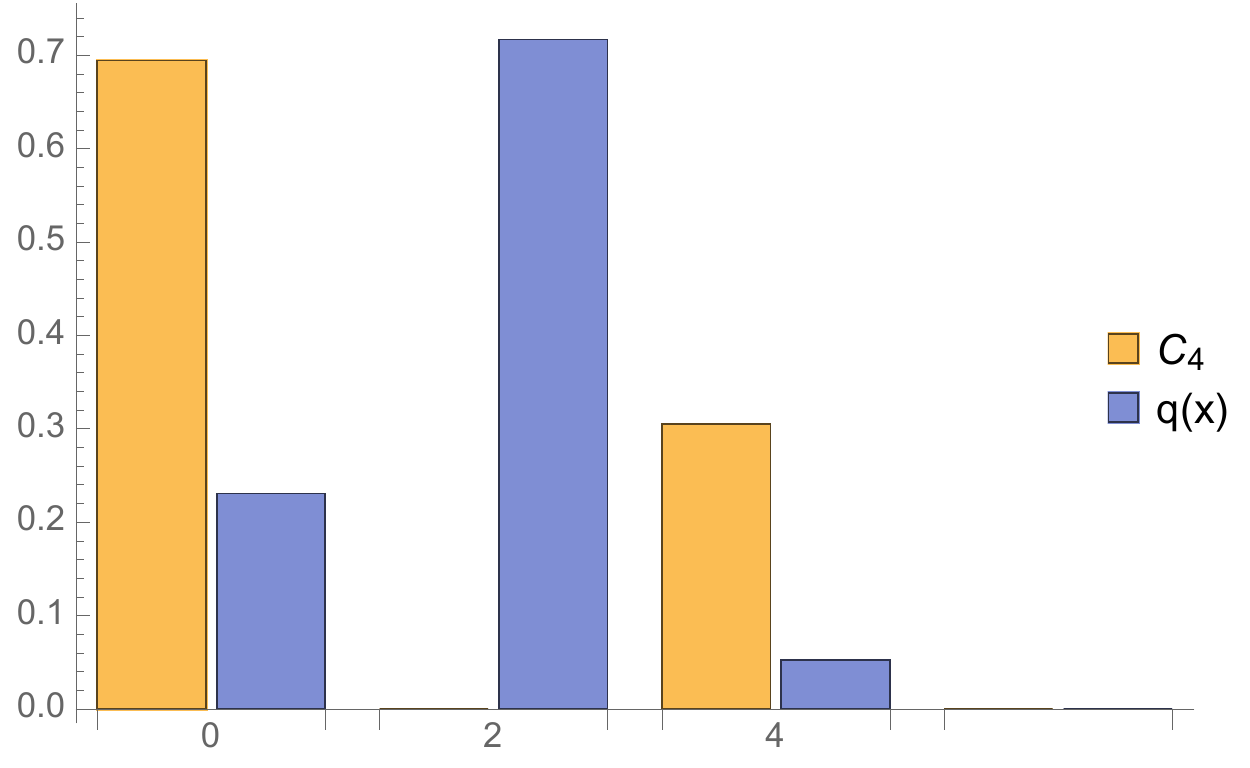}}
\subfigure{\includegraphics[width=0.2\textwidth]{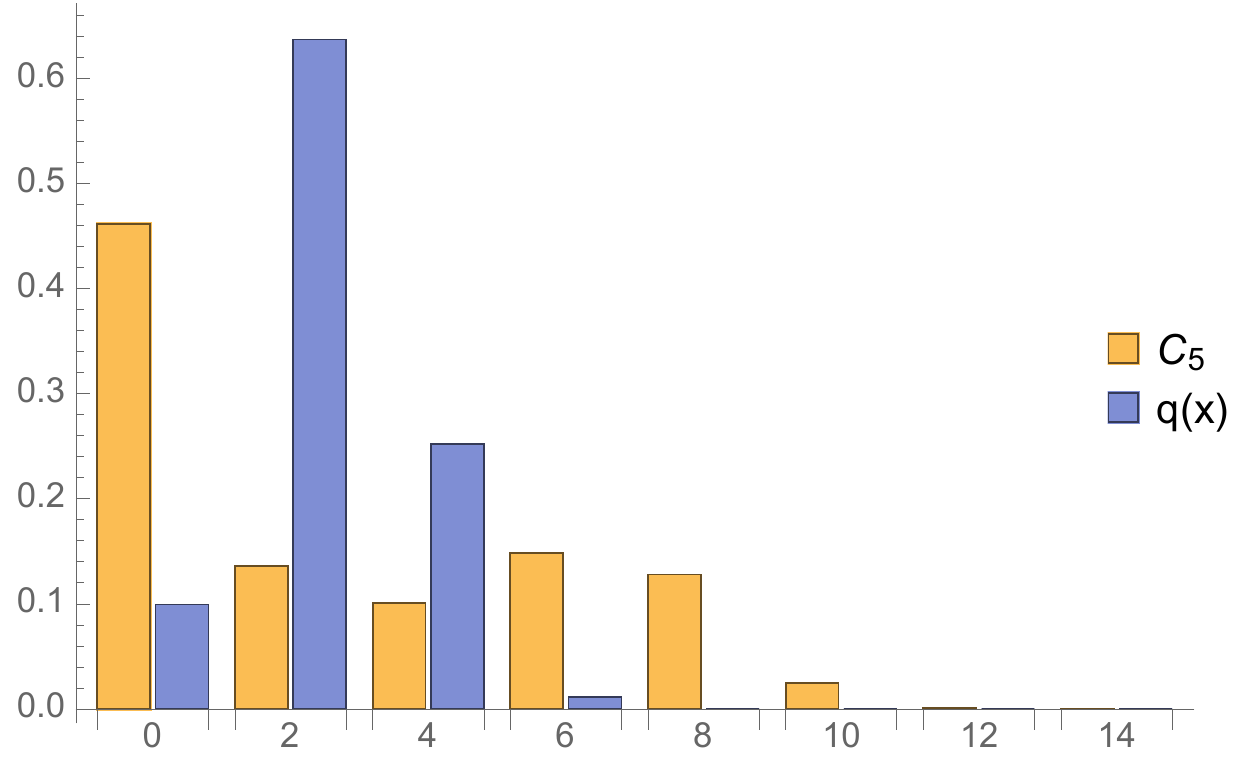}}
\subfigure{\includegraphics[width=0.2\textwidth]{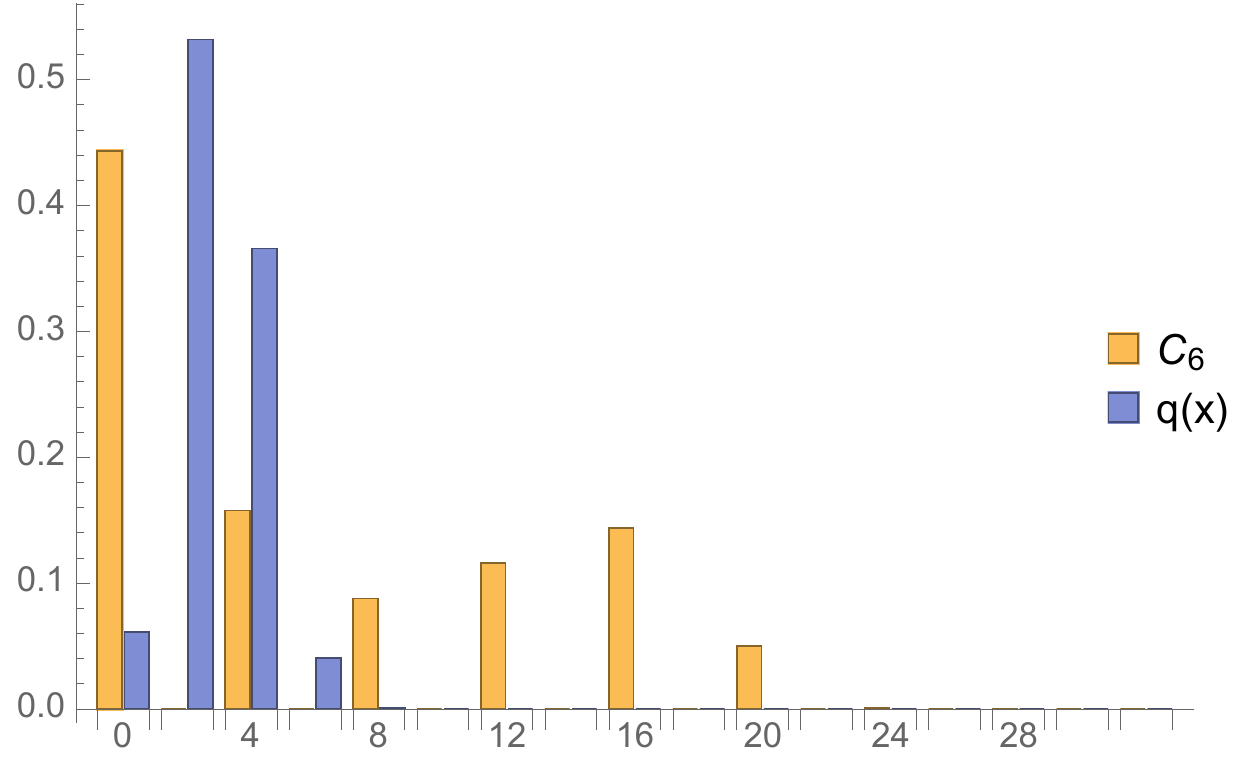}}
\subfigure{\includegraphics[width=0.2\textwidth]{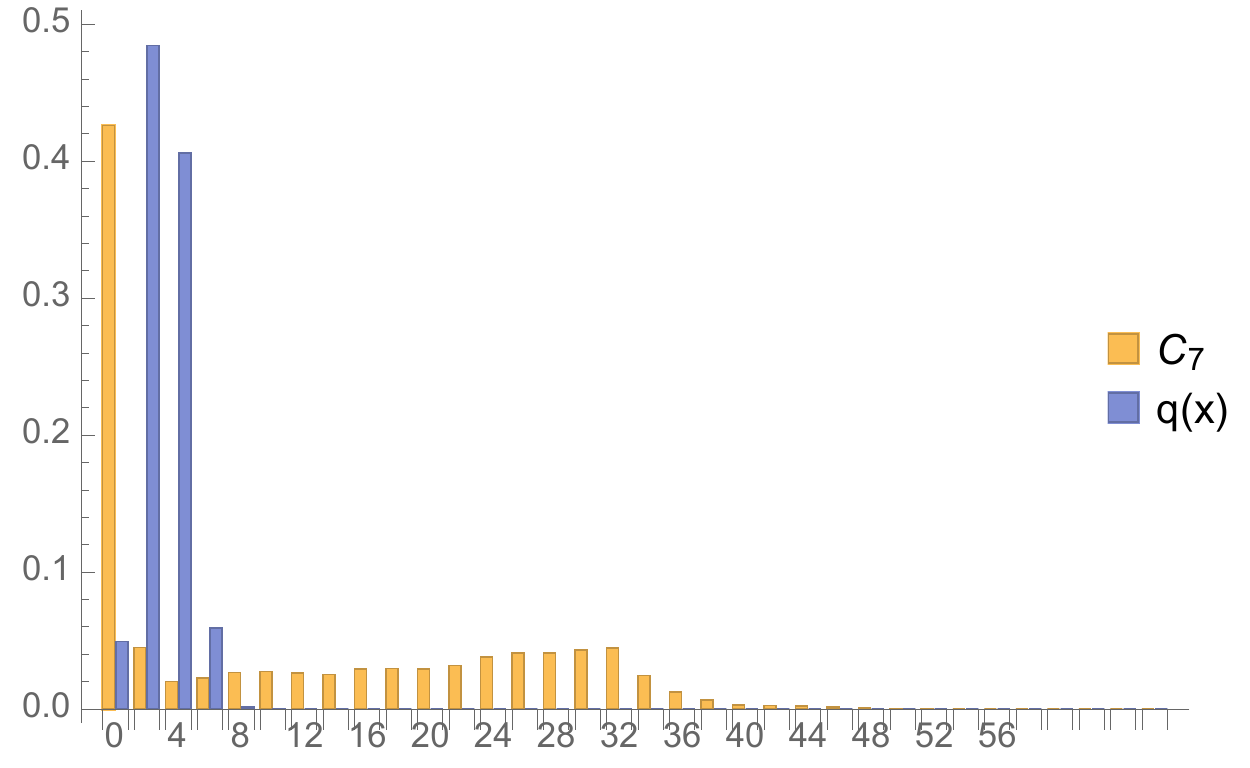}}
\subfigure{\includegraphics[width=0.2\textwidth]{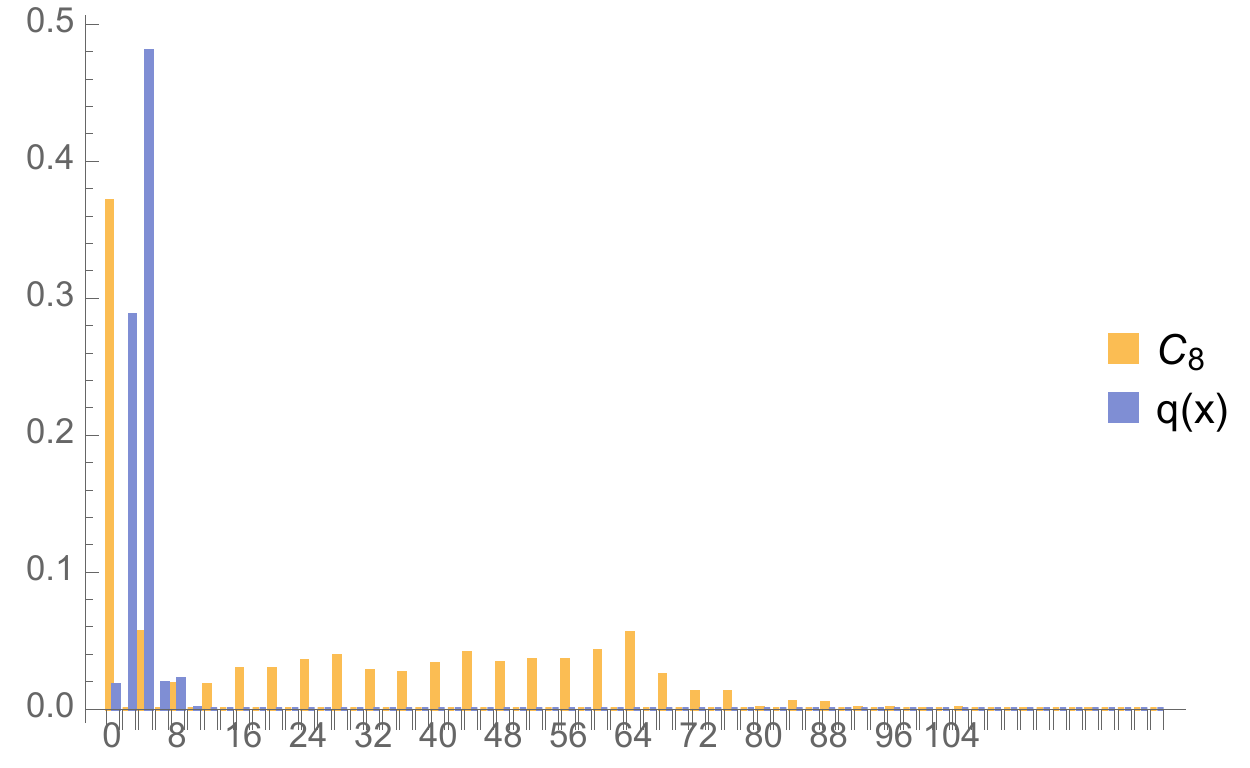}}
\subfigure{\includegraphics[width=0.2\textwidth]{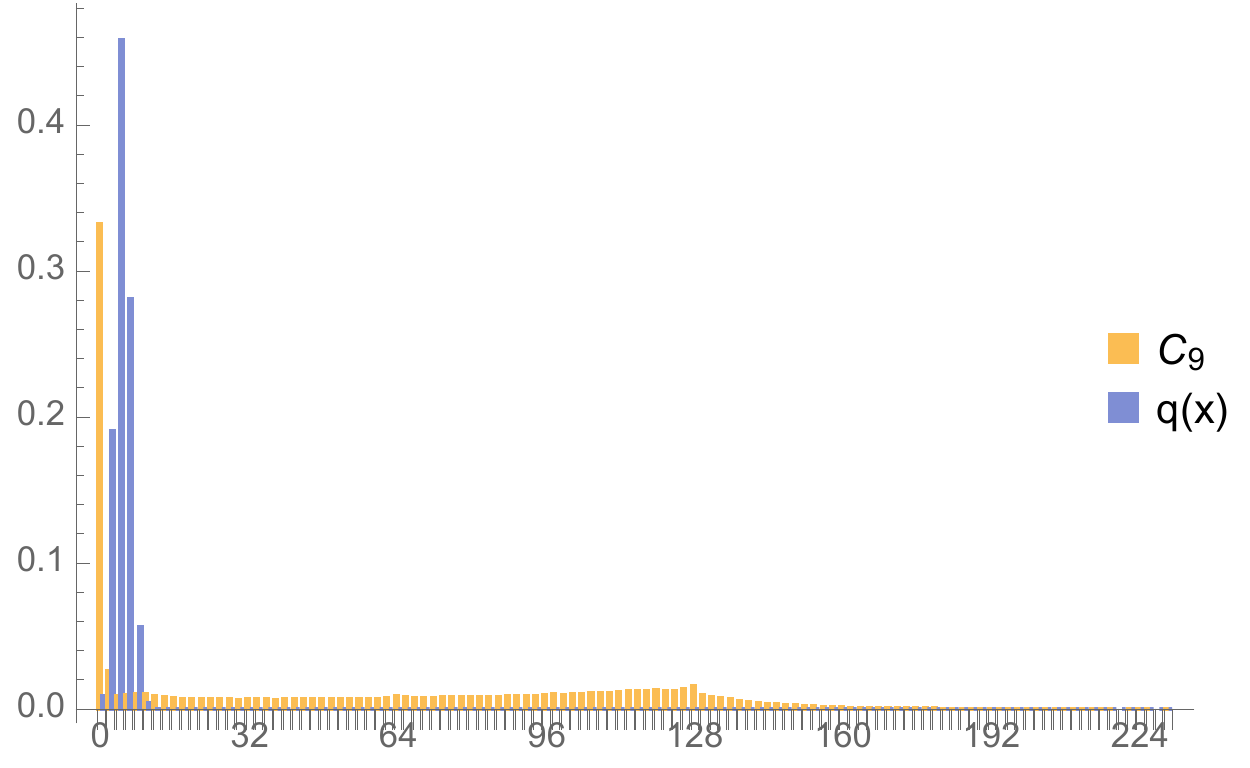}}
\subfigure{\includegraphics[width=0.2\textwidth]{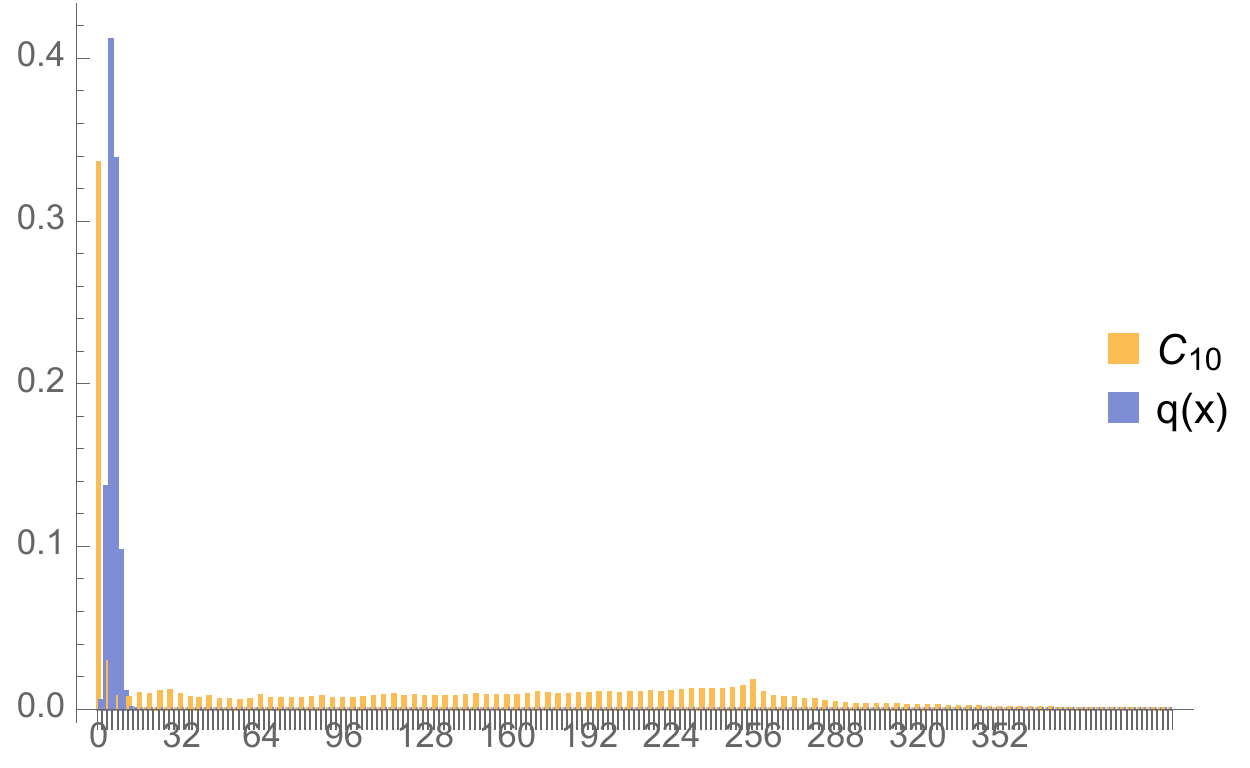}}
\caption{Distributions of the number of nontrivial real solutions to cyclic networks compared with the distribution of the number of real roots of a random polynomial of corresponding degree}
\label{cyclicnetworks}
\end{figure}

\subsubsection{Complete Networks}
We do a similar analysis here as in Section \ref{sec431}. Here we compare the number of nontrivial real solutions to power flow equations of complete $4-8$ node networks to that of polynomials of degree $12,54,220,860$ and $3304$, the generic number of nontrivial complex solutions for each network respectively. Again, we run numerical simulations using the $\mathtt{CountRoots}$ function in Mathematica to compute $10,000$ trials for $q$. We compare this against the distributions calculated in Section \ref{sec42}. Distribution results are given graphically in Figure \ref{completenetworks} and in more detail in the Appendix in Tables~\ref{k4dist},\ref{k5dist},\ref{k6dist},\ref{k7dist},\ref{k8dist}. Expected values are given in Table \ref{expectedrealcomplete}. We see in Figure \ref{completenetworks} that as the number of vertices grows, the distribution of the number of nontrivial real solutions to the power flow equations shifts much further right than for random polynomials. This is reflected in the expected values as the expected number of nontrivial real solutions for $K_5,K_6,K_7,K_8$ is much higher than that for a random polynomial. For $K_8$ we see that the expected number of nontrivial real solutions is over $28$ times as high as that for a random polynomial of degree $3304$.

While the expected number of nontrivial real solutions to the power flow equations is much higher than that of a random polynomial, it is easy to construct a univariate polynomial of degree $N$ which has $N$ real solutions by taking $q(x) = \Pi_{i=1}^N (x - \alpha_i)$ where $\alpha_i \in \mathbb{R}$ and $\alpha_i \neq \alpha_j$ for $i \neq j$. In contrast, recall that the maximal number of nontrivial real solutions to the power flow equations is an open question. 

\begin{figure}
\centering    
\subfigure{\includegraphics[width=0.2\textwidth]{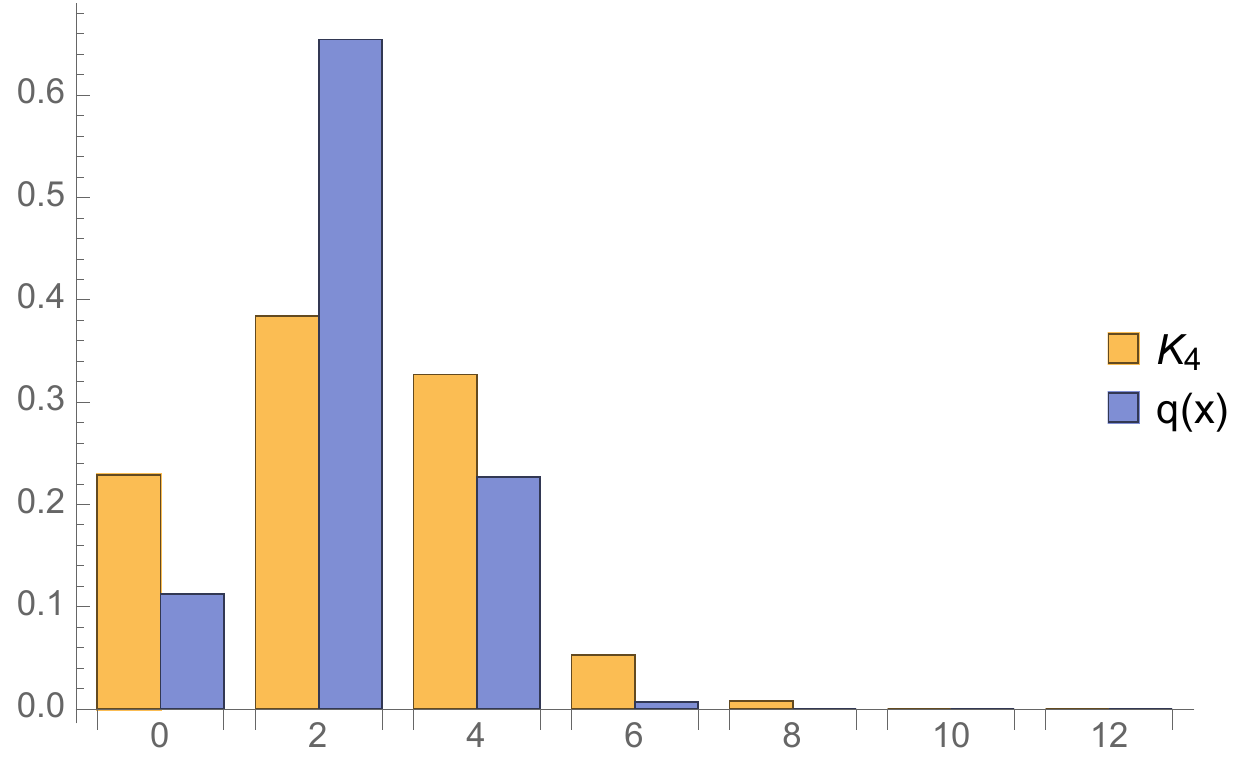}}
\subfigure{\includegraphics[width=0.2\textwidth]{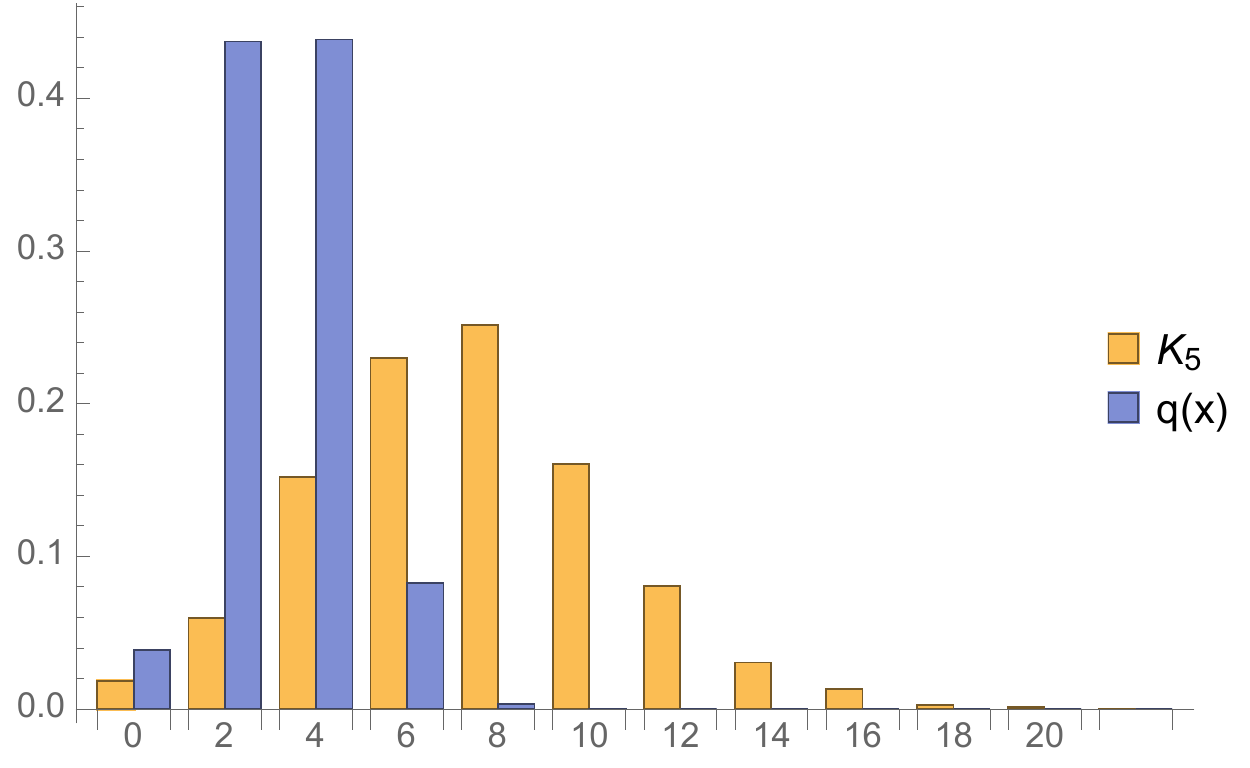}}
\subfigure{\includegraphics[width=0.2\textwidth]{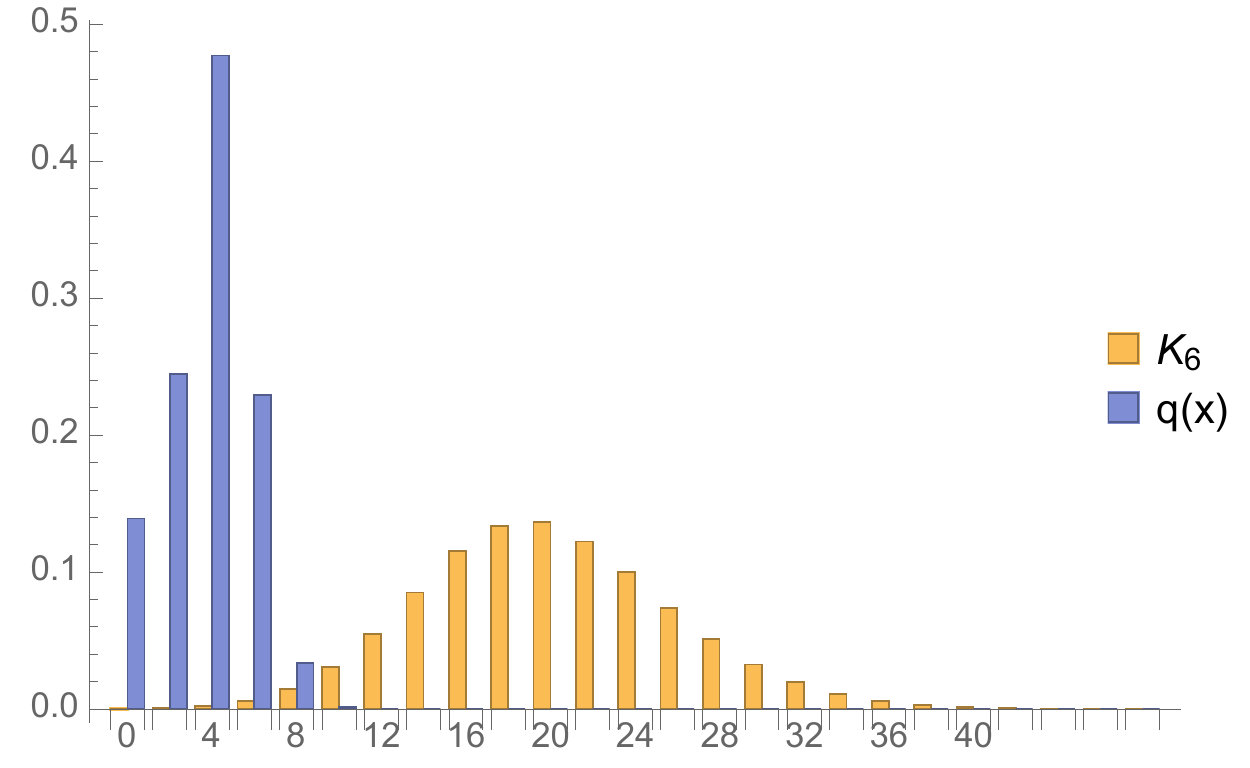}}
\subfigure{\includegraphics[width=0.2\textwidth]{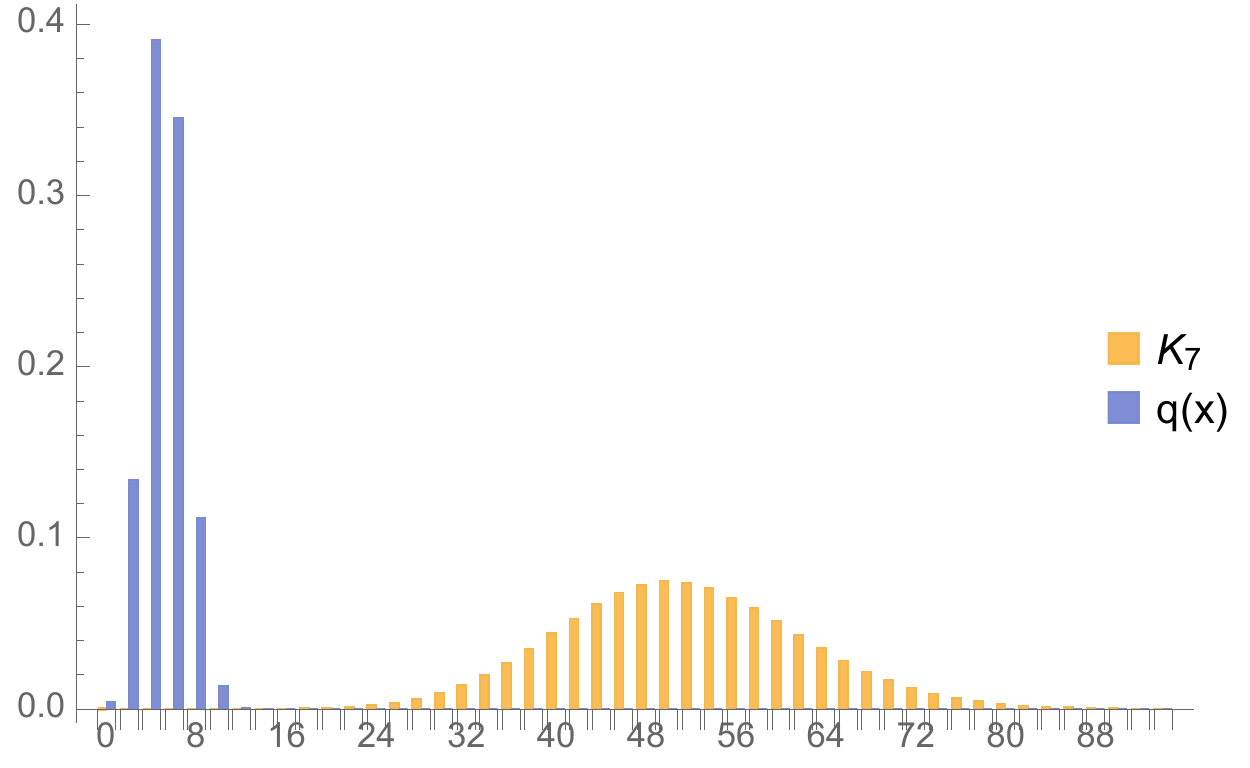}}
\subfigure{\includegraphics[width=0.2\textwidth]{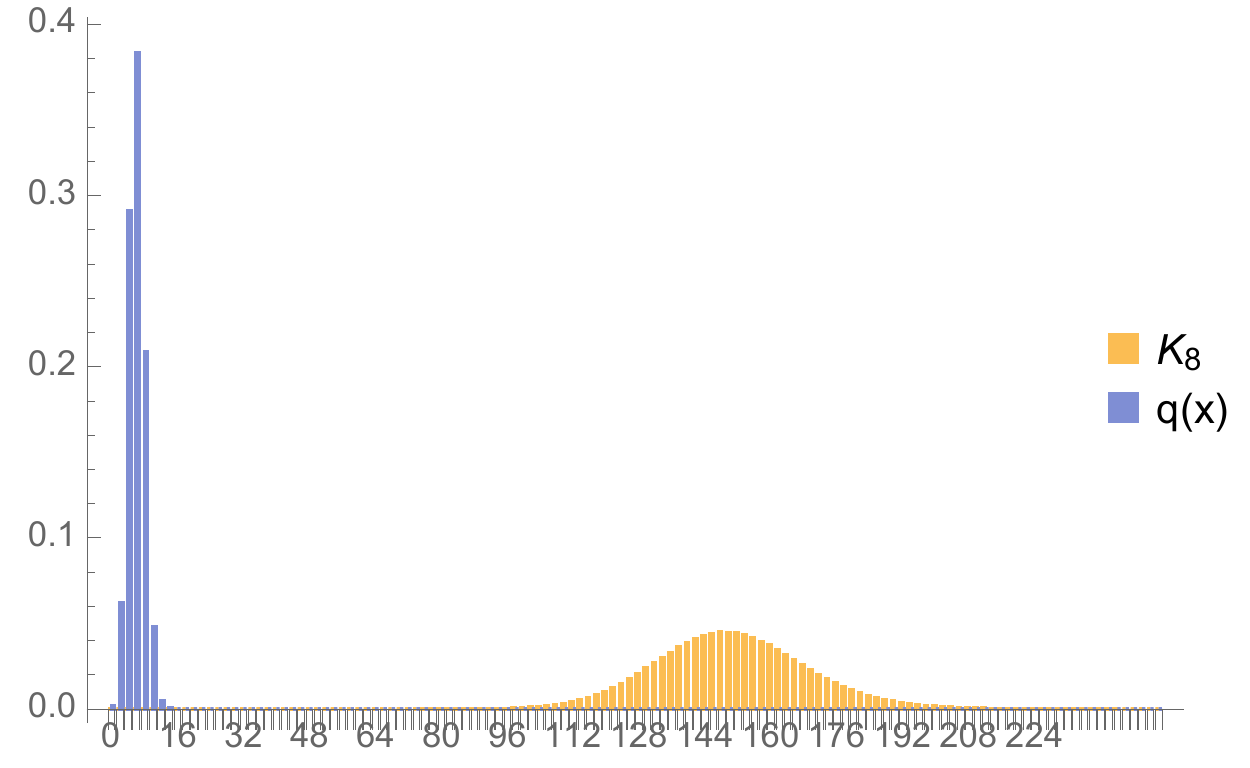}}
\caption{Distributions of the number of nontrivial real solutions to complete networks compared with the distribution of the number of real roots of a random polynomial of corresponding degree}
\label{completenetworks}
\end{figure}

\begin{table}
\caption{Expected Number of Nontrivial Real Solutions to Complete Networks}
\label{expectedrealcomplete}
\centering
\begin{tabular}{||c| c||c |c||c |c||}
\hline
 $K_3$ & $0.62$ & $K_4$ & $2.45$ & $K_5$ & $7.41$  \\
 $q(x)$ & $1.30$ & $q(x)$ & $2.26$ & $q(x)$ & $3.18$ \\
\hline
 $K_6$ & $20.11$ & $K_7$ &$51.54$ & $K_8$ & $150.65$ \\
 $q(x)$ & $4.06$ & $q(x)$ &$4.93$ & $q(x)$ & $5.30$ \\
 \hline 
\end{tabular}
\end{table}

\section{Other Families of Solutions} \label{sec5}
\subsection{Infinitely Many Solutions}
While it has been proven that for a generic choice of susceptance values, \eqref{pfeq1}-\eqref{pfeq2} admit finitely many solutions \cite{mallada2016distributed}, it is still interesting to note that there are simple examples where this fails. We extend the results of Lemma \ref{cmod4infinite}.
\begin{lem}
There are susceptance values for $K_n$ with $n\geq 4$ even that admit infinitely many real solutions.
\end{lem}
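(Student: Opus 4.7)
The plan is to adapt the strategy of Lemma~\ref{cmod4infinite}: set all susceptances equal to $1$ and use the uniformity of $K_n$ to decouple the node equations. Under this choice, the left-hand side of \eqref{pfeq2} at node $k$ becomes
\[
\sum_{m=0}^{n-1}(x_k y_m - x_m y_k) \;=\; x_k \cdot S \;-\; y_k \cdot C,
\]
where $C = \sum_{m=0}^{n-1} x_m$ and $S = \sum_{m=0}^{n-1} y_m$ are global quantities independent of $k$. Hence every one of the $n-1$ equations is satisfied as soon as $C = S = 0$, and the problem reduces to exhibiting a positive-dimensional family of configurations satisfying \eqref{pfeq1} with $(x_0, y_0) = (1,0)$ and $C = S = 0$.

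Next I would construct such a family explicitly. The slack bus contributes $(1, 0)$ to $(C, S)$; setting $(x_1, y_1) = (-1, 0)$ cancels this. Since $n$ is even, the remaining $n - 2$ nodes can be grouped into $(n-2)/2$ pairs, and within each pair $\{2j, 2j+1\}$ (for $j = 1, \dots, (n-2)/2$) I take $(x_{2j+1}, y_{2j+1}) = (-x_{2j}, -y_{2j})$, so that each pair contributes $(0, 0)$ to both $C$ and $S$. Each $(x_{2j}, y_{2j})$ is then free on the unit circle, giving $(n-2)/2 \geq 1$ continuous degrees of freedom and therefore infinitely many real solutions.

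There is no serious obstacle here; the entire content is the observation that on $K_n$ with uniform susceptances the node-$k$ equation collapses to the single global pair $C = S = 0$, after which the parity hypothesis that $n$ is even is exactly what is needed to pair the remaining $n-2$ nodes antipodally. Note that the same argument would fail for $n$ odd, since then $n-2$ is odd and the leftover nodes cannot be paired after using node $1$ to balance the slack bus, which is consistent with the statement of the lemma.
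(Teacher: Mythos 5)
Your proof is correct and is essentially the paper's own argument for the even case: set all $b_{km}=1$, note the node equations collapse to $\sum_m x_m=\sum_m y_m=0$, cancel the slack bus with node $1$ at $(-1,0)$, and pair the remaining $n-2$ nodes antipodally on the unit circle to get a positive-dimensional family. You in fact make explicit the reduction to $C=S=0$ that the paper leaves as ``it can be verified.''
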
{}
\begin{proof}
Set all susceptances $b_{km} = 1$. For $n$ even it can be verified that there exists a family of solutions of the form
\begin{align*}
    \{y_1 = 0, y_{k+1} = -y_k, x_1 = -1, x_{k+1} = -x_k \}
\end{align*}
for all even $k\geq 2$. Choosing $x_m,y_m \in (0,1)$ such that $x_m^2+y_m^2=1$ for all odd $m \geq 1$ gives infinitely many real solutions.

For $n$ odd it can be verified that there exists a family of solutions of the form
\begin{align*}
    \{& y_1 = 0, y_2 = \frac{\sqrt{3}}{2}x_3 - \frac{1}{2}y_3, y_4 = -\frac{\sqrt{3}}{2}x_3- \frac{1}{2}y_3,\\
    &y_k = -y_{k-1}, x_1 = -1, x_2 = \frac{1}{2}x_3 + \frac{\sqrt{3}}{2}y_3, \\
    &x_4 = \frac{1}{2}x_3 - \frac{\sqrt{3}}{2}y_3, x_k = -x_{k-1} \}
\end{align*}
for $k$ even and $k \geq 6$. Choosing $x_m, y_m \in (0,1)$ such that $x_m^2+y_m^2 = 1$ for all odd $m \geq 3$ gives infinitely many real solutions.
\end{proof}{}

\subsection{Only Trivial Solutions}

\begin{lem} \label{treelemma}
 Tree networks admit only trivial solutions.
\end{lem}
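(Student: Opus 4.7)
The plan is to prove the lemma by induction on the number of vertices $n$ of the tree, pruning one leaf at a time. The base case $n=2$ is immediate: the slack bus is $(1,0)$, and the single leaf equation reduces to $b_{01}(1\cdot y_1 - x_1\cdot 0)=0$, forcing $y_1=0$ and then $x_1=\pm 1$ from \eqref{pfeq1}.

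For the inductive step, the key observation is that at any leaf $v_k$ with unique neighbor $v_m$, equation \eqref{pfeq2} collapses to the single term
\begin{equation*}
b_{km}(x_k y_m - x_m y_k) = 0.
\end{equation*}
Since the edge $e_{km}$ is in $E$, we have $b_{km}\neq 0$, so $x_k y_m = x_m y_k$. Because both $(x_k,y_k)$ and $(x_m,y_m)$ lie on the unit circle by \eqref{pfeq1}, this forces $(x_k,y_k) = \pm (x_m,y_m)$, i.e.\ the two phasors are parallel. Equivalently, in the trigonometric form, $\theta_k \equiv \theta_m \pmod{\pi}$.

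For $n\geq 3$, a tree has at least two leaves, so we can choose a leaf $v_k \neq v_0$. I would then substitute $x_k y_m = x_m y_k$ into the power flow equation at the neighbor $v_m$: its $j=k$ summand vanishes, and what remains is exactly the power flow equation at $v_m$ on the tree $T\setminus\{v_k\}$. Together with the equations at the remaining non-slack vertices, this is precisely the full system \eqref{pfeq1}-\eqref{pfeq2} for $T\setminus\{v_k\}$, a tree on $n-1$ vertices (still containing the slack bus). By the inductive hypothesis every solution of this reduced system is trivial, so $(x_j,y_j)=(\pm 1,0)$ for all $j\in V\setminus\{v_k\}$. In particular $(x_m,y_m)=(\pm 1,0)$, and then $(x_k,y_k)=\pm(x_m,y_m)=(\pm 1,0)$ is trivial as well.

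There is no serious obstacle here; the only thing to check carefully is that removing a leaf yields a genuine instance of the same system (the slack bus is preserved because we chose $v_k\neq v_0$, and no edge equation outside $v_k,v_m$ is affected since $v_k$ has no other neighbors). The proof is essentially a leaf-peeling argument, which is the tree analogue of the observation in Lemma~\ref{cmod4infinite} that along a cycle the phase differences $u_k=(\theta_k-\theta_{k-1})/\pi$ must satisfy $u_k\equiv u_m$ or $u_k\equiv 1-u_m$; on a tree the unique path from each vertex back to the slack bus pins every $\theta_k$ to a multiple of $\pi$.
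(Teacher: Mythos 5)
Your proof is correct and is essentially the same leaf-peeling argument as the paper's: the single-term equation at a leaf forces its phasor to be $\pm$ that of its neighbor, the leaf's contribution to the neighbor's equation then vanishes, and the system reduces to the same system on a smaller tree. The only difference is organizational — you peel one leaf at a time and package the reduction as an induction on $n$, whereas the paper removes all leaves simultaneously, iterates until every edge has been seen to satisfy $\sin(\theta_k-\theta_m)=0$, and then propagates $\theta_k\equiv 0 \pmod{\pi}$ outward from the slack bus.
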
{} 
\begin{proof}
Let $T = (V,E)$ be a tree  and suppose $T$ has $s$ vertices with degree equal to $1$ and $t$ vertices with degree greater than or equal to one. Since $T$ is a tree, $s \geq 1$. This gives equations
\begin{align}
    \sum_{m \text{ adjacent to } k} b_{km} \sin (\theta_k - \theta_m) &= 0 \ \text{ for all } v_k, \ \deg(v_k) \geq 1 \label{nonleafeqs}\\
    b_{km} \sin (\theta_k - \theta_m) &= 0 \ \text{ for all } v_k, \ \deg(v_k) = 1. \label{leafeqs}
\end{align}
Equation $(\ref{leafeqs})$ gives that at all vertices with degree $1$, $\sin(\theta_k - \theta_m) = 0$. If $|V| > 2$ for each vertex of degree $1$, we know it must be adjacent to at least one vertex of degree greater than $1$. This means we can rewrite $(\ref{nonleafeqs}) - (\ref{leafeqs})$ as
\begin{align}
    \sum_{\substack{m \text{ adjacent to } k \\ \deg(v_k) \geq 2}} \ b_{km} \sin(\theta_k - \theta_m) &= 0 \ \text{ for all } v_k, \ \deg(v_k) \geq 1 \label{nonleafeqs2}\\
    b_{km} \sin (\theta_k - \theta_m) &= 0 \quad \text{ for all } v_k, \  \deg(v_k) = 1. \label{leafeqs2}
\end{align}

The equations $(\ref{nonleafeqs2})-(\ref{leafeqs2})$ are now sparser than $(\ref{nonleafeqs})-(\ref{leafeqs})$. The equations defined in $(\ref{nonleafeqs2})$ are the same as those on tree $T = (V', E')$ where $V' = \{v \in V : \deg(v) >1 \}$ and $E' = \{e \in E : e \text{ is adjacent to } v_k, v_m \in V'\}$. Since $T$ was a tree and $T'$ is a subgraph of $T$, this means $T'$ is also a tree. We can repeat this argument again on $T'$ and so on until we are left with a system of equations where each equation only involves one term, $b_{km} \sin (\theta_k - \theta_m)$. The equation at $v_1$ simplifies to $b_{01} \sin(\theta_1) =0$ so $\theta_1 = n \pi$ for some $n \in \mathbb{Z}$. This forces $y_1 = \sin(\theta_1) = 0$ and $x_1 = \cos(\theta_1) = \pm 1$. We also have that for all $v_l$ adjacent to $v_1$ that $b_{1l} \sin(\theta_l - \theta_1) = 0$  so $\theta_1 - \theta_l = n \pi$ for some $n \in \mathbb{Z}$. This implies $\theta_l = n' \pi $ for $n' \in \mathbb{Z}$ giving that $y_l = 0$ and $x_l = \pm 1$. This argument repeats for all vertices adjacent to $v_l$ and so on. Since $T$ is connected, this covers all vertices $v \in V$.
\end{proof}

\begin{cor}
    Solution sets for tree networks are always zero dimensional.
\end{cor}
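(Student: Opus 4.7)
The plan is to derive this corollary as an almost immediate consequence of Lemma \ref{treelemma}. That lemma established that for a tree network, every solution of the system \eqref{pfeq1}--\eqref{pfeq2} must be trivial, meaning it satisfies $y_k = 0$ and $x_k = \pm 1$ for every vertex $v_k$. So the corollary should follow by simply counting or otherwise characterizing the solution locus.

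First I would recall the definition of a trivial solution and enumerate them: the trivial solutions are in bijection with the set $\{\pm 1\}^{n-1}$, since $v_0$ is the slack bus with $(x_0,y_0)=(1,0)$ fixed, and for each remaining vertex $v_k$ we independently pick $x_k \in \{+1,-1\}$ with $y_k=0$. This gives exactly $2^{n-1}$ isolated points in $\mathbb{C}^{2(n-1)}$. Then I would invoke Lemma \ref{treelemma} to conclude that these are the only solutions, so the full solution variety coincides with this finite set of points.

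Finally, I would appeal to the basic fact from algebraic geometry that a finite set of points (in particular, a set of $2^{n-1}$ isolated points) is a zero-dimensional variety: each irreducible component is a single point, each of Krull dimension zero. Since the solution set is a finite union of points, the solution set for any tree network is zero dimensional, as claimed.

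There is essentially no obstacle here; the only thing to be careful about is the interpretation of \emph{solution set} — one should make explicit that Lemma \ref{treelemma} already rules out any positive-dimensional component (e.g., a curve of complex solutions with $y_k \neq 0$), since the proof of that lemma proceeds purely algebraically and applies over $\mathbb{C}$ as well as over $\mathbb{R}$. Once that point is noted, the zero-dimensionality is immediate.
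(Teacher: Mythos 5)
Your proposal is correct and matches the paper's (implicit) reasoning: the corollary is stated without proof as an immediate consequence of Lemma \ref{treelemma}, since the trivial solutions form a finite set of $2^{n-1}$ points and any finite set of points is zero dimensional. Your added remark that the lemma's argument rules out positive-dimensional complex components as well is a sensible precaution and does not change the approach.
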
{}

\begin{rem}
We note that Lemma \ref{treelemma} would follow from a result proven in \cite{chen2018counting} with the assumption that the variety is zero dimensional. In contrast, the proof provided here does not rely on this assumption.
\end{rem}{}

\section{Conclusion}
In this paper we presented a new method to calculate the distribution of number of real solutions of lossless power networks with all PV buses. We showed that this method is dramatically faster than standard homotopy methods. We compared the distribution of real solutions of the power flow equations to that of random polynomials and found that the power flow equations admitted many more real solutions. We also showed that for cyclic graphs the power flow equations can achieve the maximal bound of $n \binom{n-1}{\lfloor \frac{n-1}{2}\rfloor}$ real solutions and tree networks only achieve trivial real solutions. Finally, we gave explicit susceptance values for complete networks with more than three vertices and cyclic networks with $n \equiv 0 \mod 4$ vertices that give infinitely many real solutions.

\section{Acknowledgements}
The authors gratefully thank Jose Israel Rodriguez for his helpful comments and insight and gratefully  acknowledge  support  from  the  National  Science Foundation under grant DMS 1735928.

\bibliographystyle{unsrt}
\bibliography{IEEEtrans}

\section{Appendix: Distribution Data} \label{appendixdata}
\begin{table}[h!]
\caption{Distribution of Number of Nontrivial Real Solutions to $C_5$}
\label{c5dist}
\centering
\begin{tabular}{|c|c|c|c|c|}
\hline
$\#$ Real Solutions & 0 & 2 & 4 & 6  \\
Percentage Occurrence & 46.14 &13.62 &10.07& 14.84   \\
\hline
$\#$ Real Solutions & 8 & 10 & 12 & 14 \\
Percentage Occurrence &12.79 &2.45 &0.06& 0.03 \\
\hline 
\end{tabular}
\end{table}

\begin{table}[h!]
\caption{Distribution of Number of Nontrivial Real Solutions to $C_6$}
\label{c6dist}
\centering
\begin{tabular}{|c|c|c|c|c|}
\hline
$\#$ of Real Solutions & 0 & 4 & 8 & 12  \\
$\%$ of Occurrence & 44.03 & 15.84 &9.19 & 10.98  \\
\hline
$\#$ of Real Solutions& 16 & 20 & 24 & 28 \\
$\%$ of Occurrence & 14.13 & 4.67 & 0.03 & 0.13 \\
\hline 
\end{tabular}
\end{table}

\begin{table}[h!]
\caption{Distribution of Number of Nontrivial Real Solutions to $C_7$}
\label{c7dist}
\centering
\begin{tabular}{|c|c|c|c|c|c|}
\hline
$\#$  Real Solutions & 0 & 2 & 4 & 6 & 8   \\

$\%$  Occurrence & 42.61 & 4.48 & 2.01 & 2.26 & 2.66    \\
\hline
$\#$  Real Solutions & 10 & 12 & 14 & 16 & 18  \\
$\%$  Occurrence  & 2.73& 2.61 & 2.51 & 2.92& 2.96  \\
\hline 
$\#$  Real Solutions  &20 & 22 & 24 & 26  &28 \\
$\%$  Occurrence & 2.93 & 3.18& 3.79 & 4.11 & 4.10 \\
\hline 
$\#$  Real Solutions  & 30 & 32 & 34 & 36 & 38  \\
$\%$  Occurrence  & 4.33 & 4.44 & 2.45& 1.22 & 0.66   \\
\hline 
$\#$  Real Solutions & 40 & 42 & 44 & 46 & 48  \\
$\%$  Occurrence& 0.28 & 0.26 & 0.23 & 0.13 & 0.09   \\
\hline 
$\#$  Real Solutions& 50 & 52& $\geq 54$ &  &      \\
$\%$  Occurrence & 0.02 & 0.03 & 2e-4 &  &  \\
 \hline 
\end{tabular}
\end{table}

\begin{table}[h!]
\caption{Distribution of Number of Nontrivial Real Solutions to $C_8$}
\label{c8dist}
\centering
\begin{tabular}{|c|c|c|c|c|c|}
\hline
$\#$  Real Solutions & 0 & 4 & 8 & 12 & 16   \\
$\%$  Occurrence & 37.08 & 5.62 & 1.86 & 1.78 & 2.93 \\
\hline
$\#$  Real Solutions & 20 & 24 & 28 & 32 & 36 \\
$\%$  Occurrence & 2.95& 3.50 & 3.91 & 2.76 & 2.64 \\
\hline 
$\#$  Real Solutions & 40 & 44 & 48 & 52 & 56  \\
$\%$  Occurrence & 3.28 & 4.12 & 3.41 & 3.61 & 3.60 \\
\hline 
$\#$  Real Solutions & 60 & 64 & 68 & 72 & 76\\
$\%$  Occurrence  & 4.25 & 5.60 & 2.47 & 1.29 & 1.25 \\
\hline 
$\#$  Real Solutions &  80 & 84 & 88 & 92 & 96  \\
 $\%$  Occurrence&  0.85 & 0.54 & 0.44 & 0.06 &  0.09 \\
 \hline 
$\#$  Real Solutions & 100 & 104 & 108 & 112 & $\geq116$ \\
$\%$  Occurrence&  0.02 & 0.06 &  0.01 & 0.03 & 0.01 \\
 \hline 
\end{tabular}
\end{table}

\begin{table}[h!]
\caption{Distribution of Number of Nontrivial Real Solutions to $C_9$}
\label{c9dist}
\centering
\begin{tabular}{|c|c|c|c|c|c|}
\hline
$\#$  Real Solutions & 0 & 2 & 4 & 6 & 8   \\

$\%$  Occurrence & 33.24 & 2.64 & 0.93 & 0.96 &  1.00   \\
\hline
$\#$  Real Solutions & 10 & 12 & 14 & 16 & 18  \\
$\%$  Occurrence  & 1.00 &  0.91 & 0.81 & 0.77 & 0.73 \\
\hline 
$\#$  Real Solutions  &20 & 22 & 24 & 26  & 28  \\
$\%$  Occurrence & 0.68 & 0.68 & 0.71 & 0.71 & 0.68   \\
\hline 
$\#$  Real Solutions & 30 & 32 & 34 & 36 & 38  \\
$\%$  Occurrence  & 0.65 & 0.72 & 0.71 & 0.67 & 0.67 \\
\hline 
$\#$  Real Solutions & 40 & 42 & 44 & 46 & 48  \\
$\%$  Occurrence&   0.67 & 0.68 & 0.69 & 0.68 & 0.69 \\
\hline 
$\#$  Real Solutions& 50 & 52& 54  & 56 & 58      \\
$\%$  Occurrence &  0.69 & 0.70 & 0.72 & 0.72 & 0.72 \\
 \hline 
 $\#$  Real Solutions& 60 & 62& 64  & 66 & 68      \\
$\%$  Occurrence &  0.72 & 0.74 & 0.91 & 0.81 & 0.77 \\
 \hline 
 $\#$  Real Solutions& 70 & 72& 74  & 76 & 78      \\
$\%$  Occurrence & 0.75 & 0.80 & 0.82 & 0.81 & 0.82  \\
 \hline 
 $\#$  Real Solutions& 80 & 82& 84  & 86 & 88      \\
$\%$  Occurrence &  0.82 & 0.83 & 0.85 & 0.87 & 0.92 \\
 \hline 
 $\#$  Real Solutions& 90 & 92& 94  & 96 & 98      \\
$\%$  Occurrence & 0.91 & 0.90 & 0.93 & 1.00 & 1.01 \\
 \hline 
 $\#$  Real Solutions& 100 & 102& 104  & 106 & 108      \\
$\%$  Occurrence & 0.99 & 1.01 & 1.06 & 1.08 & 1.10  \\
 \hline 
 $\#$  Real Solutions& 110 & 112& 114  & 116 & 118      \\
$\%$  Occurrence &  1.14 &  1.17 &  1.22 &  1.24 &  1.27 \\
 \hline 
  $\#$  Real Solutions& 120 & 122& 124  & 126 & 128      \\
$\%$  Occurrence &  1.30 &  1.27 &  1.27 & 1.37 & 1.59  \\
 \hline 
   $\#$  Real Solutions& 130 & 132& 134  & 136 & 138      \\
$\%$  Occurrence & 0.94 & 0.82 & 0.75 & 0.67 & 0.60  \\
 \hline 
   $\#$  Real Solutions& 140 & 142& 144  & 146 & 148      \\
$\%$  Occurrence & 0.50 & 0.40 & 0.34 & 0.33 &  0.31 \\
 \hline 
   $\#$  Real Solutions& 150 & 152& 154  & 156 & 158      \\
$\%$  Occurrence & 0.30 & 0.25 & 0.22 & 0.18 &  0.16  \\
 \hline 
   $\#$  Real Solutions& 160 & 162& 164  & 166 & 168      \\
$\%$  Occurrence & 0.13 & 0.11 & 0.10 & 0.10 &  0.09  \\
 \hline 
   $\#$  Real Solutions& 170 & 172& 174  & 176 & 178      \\
$\%$  Occurrence &0.08 & 0.07 & 0.07 & 0.07 &  0.06   \\
 \hline 
   $\#$  Real Solutions& 180 & 182& 184  & 186 & 188      \\
$\%$  Occurrence & 0.05 & 0.05 & 0.04 & 0.04 & 0.03  \\
 \hline 
   $\#$  Real Solutions& 190 & 192& 194  & 196 & 198      \\
$\%$  Occurrence & 0.03 & 0.03 & 0.02 &  0.02  & 0.03  \\
 \hline 
    $\#$  Real Solutions& 200 & 202& 204  & 206 & $\geq 208$      \\
$\%$  Occurrence & 0.02 & 0.01 & 0.01 &  0.01  & 0.06 \\
 \hline 
\end{tabular}
\end{table}

\begin{table}[h!]
\caption{Distribution of Number of Nontrivial Real Solutions to $C_{10}$}
\label{c10dist}
\centering
\begin{tabular}{|c|c|c|c|c|c|}
\hline
$\#$  Real Solutions & 0 & 4 & 8 & 12 & 16   \\
$\%$  Occurrence & 33.60 & 2.91 & 0.76  & 0.67 & 0.93 \\
\hline
$\#$  Real Solutions & 20 & 24 & 28 & 32 & 36 \\
$\%$  Occurrence & 0.89 & 1.05 &1.12 & 0.90 & 0.70 \\
\hline 
$\#$  Real Solutions & 40 & 44 & 48 & 52 & 56  \\
$\%$  Occurrence & 0.64 & 0.74 & 0.58 & 0.58 & 0.53 \\
\hline 
$\#$  Real Solutions & 60 & 64 & 68 & 72 & 76\\
$\%$  Occurrence  & 0.58 &  0.80 & 0.65 & 0.64 & 0.64 \\
\hline 
$\#$  Real Solutions &  80 & 84 & 88 & 92 & 96  \\
 $\%$  Occurrence& 0.62 & 0.66 & 0.75 & 0.64 & 0.66 \\
 \hline 
$\#$  Real Solutions & 100 & 104 & 108 & 112 & 116 \\
$\%$  Occurrence&  0.66 & 0.71 & 0.74 & 0.80 & 0.87 \\
 \hline 
$\#$  Real Solutions & 120 & 124 & 128 & 132 & 136 \\
$\%$  Occurrence& 0.73 &  0.78 & 0.78 & 0.73 & 0.76  \\
 \hline 
$\#$  Real Solutions & 140 & 144 & 148 & 152& 156  \\
$\%$  Occurrence& 0.73 &  0.79 & 0.87 & 0.81 & 0.79 \\
 \hline 
 $\#$  Real Solutions & 160 & 164 & 168 & 172& 176  \\
$\%$  Occurrence& 0.81 &  0.83 & 0.86 & 1.0 & 0.94 \\
 \hline 
 $\#$  Real Solutions & 180 & 184 & 188 & 192& 196  \\
$\%$  Occurrence& 0.88 & 0.88 & 0.92 & 0.96 & 1.0 \\
 \hline 
 $\#$  Real Solutions & 200 & 204 & 208 & 212& 216  \\
$\%$  Occurrence& 0.99& 0.96 &  0.99 & 1.0 & 1.03 \\
 \hline 
 $\#$  Real Solutions & 220 & 224 & 228 & 232& 236  \\
$\%$  Occurrence& 1.02 & 1.04 & 1.14 & 1.19 & 1.19 \\
 \hline 
 $\#$  Real Solutions  & 240 & 244 & 248& 252 & 256  \\
$\%$  Occurrence& 1.20 & 1.20 & 1.22 &1.37 & 1.75 \\
 \hline 
 $\#$  Real Solutions  & 260 & 264 & 268& 272 & 276  \\
$\%$  Occurrence& 1.01 & 0.77 & 0.71 &  0.67 & 0.59 \\
 \hline 
 $\#$  Real Solutions  & 280 & 284 & 288& 292 & 296  \\
$\%$  Occurrence&  0.54 &  0.47 &  0.36 & 0.31& 0.29 \\
 \hline 
 $\#$  Real Solutions  & 300 & 304 & 308& 312 & 316  \\
$\%$  Occurrence& 0.26 &  0.25 &  0.25 & 0.23 & 0.22 \\
 \hline 
 $\#$  Real Solutions  & 320 & 324 & 328& 332 & 336  \\
$\%$  Occurrence& 0.21 & 0.19 &  0.18 & 0.15 & 0.14 \\
 \hline 
 $\#$  Real Solutions  & 340 & 344 & 348& 352 & 356  \\
$\%$  Occurrence& 0.12 &  0.11 & 0.09 &  0.10 &  0.09 \\
 \hline 
 $\#$  Real Solutions  & 360 & 364 & 368& 372 & 376  \\
$\%$  Occurrence& 0.07 &  0.07 &  0.06 &  0.05 &  0.04 \\
 \hline 
 $\#$  Real Solutions  & 380 & 384 & 388& 392 & 396 \\
$\%$  Occurrence&  0.04 &  0.03 & 0.03 & 0.02 &  0.02 \\
 \hline 
 $\#$  Real Solutions  & 400 & 404 & 408& 412 & $\geq 416$  \\
$\%$  Occurrence& 0.02 & 0.02 &  0.02 &  0.01 &  0.01 \\
 \hline 
\end{tabular}
\end{table}

\begin{table}[h!]
\caption{Distribution of Number of Nontrivial Real Solutions to $K_4$}
\label{k4dist}
\centering
\begin{tabular}{|c|c|c|c|c|c|c|c|c|}
\hline
$\#$ Real Solutions & 0 & 2 & 4 & 6 & 8 & 10 & 12 \\
$\%$ Occurrence & 22.91 & 38.38 &32.64& 5.32 & 0.74 &4e-6 & 0.00  \\
\hline
\end{tabular}
\end{table}

\begin{table}[h!]
\caption{Distribution of Number of Nontrivial Real Solutions to $K_5$}
\label{k5dist}
\centering
\begin{tabular}{|c|c|c|c|c|c|}
\hline
$\#$  Real Solutions & 0 & 2 & 4 & 6 & 8  \\
$\%$  Occurrence & 1.82 & 5.94 & 15.18 & 22.97 & 25.17  \\
\hline 
 $\#$ Real Solution & 10 & 12 & 14& 16 & 18   \\
$\%$ Occurrence & 16.03 & 8.04 & 3.05& 1.29 & 0.35  \\
\hline
 $\#$ Real Solution & 20 & 22 &  $\geq 24$ &  &   \\
 $\%$ Occurrence & 0.14 & 0.02 &3e-3 &  & \\
 \hline
\end{tabular}
\end{table}

\begin{table}[h!]
\caption{Distribution of Number of Nontrivial Real Solutions to $K_6$}
\label{k6dist}
\centering
\begin{tabular}{|c|c|c|c|c|c|}
\hline
$\#$  Real Solutions & 0 & 2 & 4 & 6 & 8   \\

$\%$  Occurrence & 0.02 & 0.06 & 0.22 & 0.59 & 1.48 \\
\hline
$\#$  Real Solutions & 10 & 12 & 14 & 16& 18   \\
$\%$  Occurrence  & 3.05 & 5.50 & 8.50 & 11.53& 13.35  \\
\hline 
$\#$ Real Solutions & 20 & 22 & 24 &26 & 28   \\
$\%$ Occurrence & 13.67 & 12.22 & 10.00 & 7.37 & 5.12  \\
\hline 
$\#$ Real Solutions & 30 & 32 & 34& 36 & 38 \\
$\%$ Occurrence& 3.24 & 1.95 & 1.07& 0.57 & 0.28  \\
\hline 
$\#$ Real Solutions & 40 & 42 & 44& 46 & $\geq 48$ \\
$\%$ Occurrence& 0.14 & 0.06 & 0.03 & 0.01 & 8e-5  \\
\hline 
\end{tabular}
\end{table}

\begin{table}[h!]
\caption{Distribution of Number of Nontrivial Real Solutions to $K_7$}
\label{k7dist}
\centering
\begin{tabular}{|c|c|c|c|c|c|}
\hline
$\#$  Real Solutions & 0 & 2 & 4 & 6 & 8   \\
$\%$  Occurrence & 0.00 & 0.00 & 2e-6 & 1e-6 & 9e-6   \\
\hline
$\#$  Real Solutions & 10 & 12 & 14 & 16 & 18  \\
$\%$  Occurrence & 1e-5 & 4e-5 & 7e-5 & 0.02 & 0.03    \\
\hline 
$\#$  Real Solutions  &20 & 22 & 24 & 26 & 28    \\
$\%$  Occurrence & 0.06& 0.13 & 0.22 & 0.37 & 0.60     \\
\hline 
$\#$  Real Solutions& 30 & 32 & 34 & 36 & 38  \\
$\%$  Occurrence & 0.93 & 1.39 & 1.97& 2.70 & 3.53\\
\hline 
$\#$  Real Solutions & 40& 42 & 44 & 46 & 48     \\
$\%$  Occurrence & 4.43 & 5.29& 6.14 & 6.81 & 7.25 \\
\hline 
$\#$  Real Solutions & 50 & 52  & 54& 56 & 58    \\
$\%$  Occurrence & 7.46 & 7.37& 7.05 & 6.49 & 5.89 \\
\hline 
$\#$ Real Solutions & 60 & 62& 64 & 66 &  68  \\
$\%$ Occurrence & 5.13 & 4.30 & 3.54& 2.84 & 2.20   \\
\hline 
$\#$ Real Solutions & 70   & 72 & 74& 76 & 78  \\
$\%$ Occurrence & 1.68    & 1.25 & 0.91 & 0.66 & 0.46\\
\hline
$\#$ Real Solutions& 80 & 82 & 84 &86& 88 \\
$\%$ Occurrence & 0.31 & 0.21 & 0.14 & 0.10&  0.06    \\
\hline
$\#$ Real Solutions& 90 & 92 & 94 &$\geq 96$ &   \\
$\%$ Occurrence& 0.004 & 0.03 & 0.02 & 3e-2 &  \\
\hline
\end{tabular}
\end{table}

\begin{table}[h!]
\caption{Distribution of Number of Nontrivial Real Solutions to $K_8$}
\label{k8dist}
\centering
\begin{tabular}{|c|c|c|c|c|c|}
\hline
$\#$  Real Solutions & $\leq 88$ & 90 & 92 & 94 & 96   \\
$\%$  Occurrence & 0.04 & 0.02 & 0.02& 0.03 & 0.04 \\
\hline
$\#$  Real Solutions & 98 & 100 & 102 & 104 & 106  \\
$\%$  Occurrence & 0.05& 0.08 & 0.10 & 0.14 &0.19   \\
\hline 
$\#$  Real Solutions &108 &110 & 112 & 114 & 116    \\
$\%$  Occurrence & 0.25 &0.32 & 0.42 & 0.54 &0.67   \\
\hline 
$\#$  Real Solutions& 118 & 120 & 122 & 124 & 126   \\
$\%$  Occurrence & 0.84 & 1.03  & 1.26 &   1.50 & 1.78  \\
\hline 
$\#$  Real Solutions & 128 & 130& 132 & 134 & 136     \\
$\%$  Occurrence & 2.06  & 2.39 & 2.69 & 2.99 & 3.29\\
\hline 
$\#$  Real Solutions & 138 & 140  & 142& 144 & 146    \\
$\%$  Occurrence &  3.61 & 3.85 & 4.13 & 4.26 & 4.41  \\
\hline 
$\#$ Real Solutions & 148 & 150& 152 & 154 &  156  \\
$\%$ Occurrence & 4.50 & 4.46 & 4.43  & 4.33 & 4.15   \\
\hline 
$\#$ Real Solutions & 158   & 160 & 162& 164 & 166  \\
$\%$ Occurrence & 3.95 & 3.74 & 3.44 & 3.17 & 2.90  \\
\hline
$\#$ Real Solutions& 168 & 170 & 172 &174& 176 \\
$\%$ Occurrence & 2.59 & 2.30 & 2.014 & 1.77 & 1.53   \\
\hline
$\#$ Real Solutions& 178 & 180 & 182 &184 &186   \\
$\%$ Occurrence& 1.32 & 1.12 & 0.95 & 0.79 & 0.66 \\
\hline
$\#$ Real Solutions& 188 & 190 & 192 &194 & 196  \\
$\%$ Occurrence&  0.55 & 0.46 & 0.37 & 0.31 & 0.24 \\
\hline
$\#$ Real Solutions& 198 & 200 & 202 &204 &206   \\
$\%$ Occurrence& 0.21 & 0.16 & 0.13 & 0.11 & 0.08 \\
\hline
$\#$ Real Solutions& 208 & 210 & 212 &214 & 216  \\
$\%$ Occurrence& 0.07 & 0.05 & 0.04 & 0.03 & 0.03  \\
\hline
$\#$ Real Solutions& 218 & 220 & 222 &224 &  $\geq 226$ \\
$\%$ Occurrence&  0.02 & 0.02 & 0.01 & 0.01 &0.03 \\
\hline
\end{tabular}
\end{table}

\end{document}